\documentclass{amsart}

\linespread{1.3}
\usepackage{geometry}
\geometry{left=27.5mm,right=27.5mm, top=25mm, bottom=25mm}
\numberwithin{equation}{section}
\numberwithin{figure}{section}

\usepackage{amsmath, amsthm, amssymb, amstext, amsfonts}
\usepackage{enumerate}
\usepackage{moreenum}
\usepackage{graphicx}
\usepackage[T1]{fontenc} 
\usepackage[applemac]{inputenc}

\usepackage{graphicx}
\usepackage[dvipsnames]{xcolor}
\usepackage{tikz}
\usetikzlibrary{arrows,patterns}
\usepackage[bookmarks,hyperfootnotes=false, psdextra=true]{hyperref}
\usepackage[nameinlink, capitalise, noabbrev]{cleveref}
\colorlet{darkishRed}{red!60!black}
\colorlet{darkishBlue}{blue!60!black}
\colorlet{darkishGreen}{green!50!black}
\colorlet{lightishGreen}{green!70!black}
\hypersetup{
    draft = false,
    bookmarksopen=true,
    colorlinks,
    linkcolor={darkishBlue},
    citecolor={lightishGreen},
    urlcolor={darkishBlue}
}

\theoremstyle{plain}

\newtheorem{thm}{Theorem}[section]
\crefname{thm}{Theorem}{Theorems}
\newtheorem{lem}[thm]{Lemma}
\crefname{lem}{Lemma}{Lemmas}
\newtheorem{cor}[thm]{Corollary}
\newtheorem{prop}[thm]{Proposition}
\newtheorem{obs}[thm]{Observation}

\newtheorem{prob}[thm]{Problem}
\newtheorem*{prob*}{Problem}
\newtheorem{conj}[thm]{Conjecture}

\newtheorem{mainresult}{Theorem}
\newtheorem{maincorollary}[mainresult]{Corollary}
\crefname{maincorollary}{Corollary}{Corollaries}

\crefname{mainresult}{Theorem}{Theorems}

\theoremstyle{definition}

\newtheorem{ex}[thm]{Example}

\newcommand{\N}{\ensuremath{\mathbb{N}}}
\newcommand{\Z}{\ensuremath{\mathbb{Z}}}

\newcommand{\cE}{\ensuremath{\mathcal{E}}}

\newcommand{\cR}{\ensuremath{\mathcal{R}}}

\newcommand{\cS}{\ensuremath{\mathcal{S}}}

\newcommand{\cV}{\ensuremath{\mathcal{V}}}

\newcommand{\sm}{\ensuremath{\smallsetminus}}

\newcommand{\sub}{\subseteq}

\def\qt{quasi-tran\-si\-tive}
\def\qi{quasi-iso\-metric}

\def\qg{quasi-geo\-de\-sic}

\def\lf{locally finite}

\newcommand{\comment}[1]{}

\def\?#1{\vadjust{\vbox to 0pt{\vss\vskip-8pt\leftline{%
     \llap{\hbox{\vbox{\pretolerance=-1
     \doublehyphendemerits=0\finalhyphendemerits=0
     \hsize16truemm\tolerance=10000\small
     \lineskip=0pt\lineskiplimit=0pt
     \rightskip=0pt plus16truemm\baselineskip8pt\noindent
     \hskip0pt        
     #1\endgraf}\hskip7truemm}}}\vss}}}


\newenvironment{txteq*}
  {
    \begin{equation*}
    \begin{minipage}[c]{0.85\textwidth} 
    \em                                
  }
  {\end{minipage}\end{equation*}\ignorespacesafterend}

\let\eps=\varepsilon
\let\phi=\varphi

\newcommand{\defn}[1]{{\color{darkishRed}{\emph{#1}}}}
\newcommand{\defnm}[1]{{\color{darkishRed}{#1}}}

\usepackage{enumitem}


\newenvironment{customthm}[1]
  {\innercustomthm}
  {\endinnercustomthm}

\newenvironment{customcor}[1]
  {\innercustomcor}
  {\endinnercustomcor}

\newcounter{claimcounter}[thm]
\setcounter{claimcounter}{0}

\newtheorem*{claim*}{Claim}
\crefname{claim}{Claim}{Claims}

\usepackage[skip=6pt]{subcaption} 

\makeatletter
\def\namedlabel#1#2{\begingroup
   \def\@currentlabel{#2}%
   \label{#1}\endgroup
}
\makeatother

\usepackage{etoolbox}

\newbool{arXiv}
\boolfalse{arXiv} 

\newcommand{\ArXivOrNot}[2]{\ifbool{arXiv}{{#1}}{{#2}}}

\usepackage{csquotes}
\usepackage[backend=biber, style=alphabetic, sorting=nyt, maxnames=99, maxalphanames=99, giveninits=true, sortcites=true]{biblatex}

\addbibresource{localarxiv.bib}

\makeatletter
\newcommand\thankssymb[1]{\textsuperscript{\@fnsymbol{#1}}}
\makeatother

\newcommand{\COMMENT}[1]{{}}

\begin{document}

\title[A coarse Halin Grid Theorem]{A coarse Halin Grid Theorem with applications to quasi-transitive, locally finite graphs}
\author[Sandra Albrechtsen]{Sandra Albrechtsen}
\author[Matthias Hamann]{Matthias Hamann\thankssymb{1}}
\thanks{\thankssymb{1} Funded by the Deutsche Forschungsgemeinschaft (DFG) - Project No.\ 549406527.}

\email{sandra@albrechtsen-mail.de, matthias.hamann@uni-hamburg.de}
\address{University of Hamburg, Department of Mathematics, Bundesstrasse 55 (Geomatikum), 20146 Hamburg, Germany}

\keywords{Half-grid, Asymptotic minor, Diverging minor, Quasi-transitive graph, One-ended groups, Thick end, Halin's grid theorem, Coarse graph theory}
\subjclass[2020]{05C83, 20F69, 05C63, 51F30, 05C40}

\begin{abstract}
We prove a coarse version of Halin's Grid Theorem: Every one-ended, locally finite graph that contains the disjoint union of infinitely many rays as an asymptotic minor also contains the half-grid as an asymptotic minor. 
More generally, we show that the same holds for arbitrary (not necessarily one-ended or locally finite) graphs under additional, necessary assumptions on the minor-models of the infinite rays.
This resolves a conjecture of Georgakopoulos and Papasoglu.

As an application, we show that every one-ended, quasi-transitive, locally finite graph contains the half-grid as an asymptotic minor and as a diverging minor. This in particular includes all locally finite Cayley graphs of one-ended finitely generated groups and solves a problem of Georgakopoulos and Papasoglu. 
\end{abstract}

\maketitle

\section{Introduction}

An \defn{end} of a graph~$G$ is an equivalence class of rays in~$G$ where two rays are equivalent if there are infinitely many pairwise disjoint paths between them in~$G$. An end is \defn{thick} if it contains infinitely many pairwise disjoint rays. The \defn{half-grid} (i.e.\ the grid\footnote{In a grid two vertices $(m,n)$ and $(m',n')$ are adjacent if and only if $|m - m'| + |n - n'| = 1$} on $\N\times\N$), is an obvious example: it has infinitely many disjoint rays, and they all belong to the same end, which hence must be thick.

One of the cornerstones of infinite graph theory is the following result of Halin \cite[Satz 4$^\prime$]{halin65}, which asserts that this observation has a converse, and thus the half-grid can be thought of as a prototype for thick ends: 

\begin{customthm}{Halin's Grid Theorem}
\namedlabel{HalinsGridTheorem}{Halin's Grid Theorem}
    \emph{Every graph with a thick end contains the half-grid as a minor.}    
\end{customthm}

However, the half-grid minor which \ref{HalinsGridTheorem} provides can have arbitrary distortion compared to the geometry of the ambient graph. Consider for example the infinite clique~$K_{\aleph_0}$. Its unique end is thick, and it clearly contains the half-grid as a minor. However, $K_{\aleph_0}$ is quasi-isometric\footnote{Two graphs $G, H$ are \defn{quasi-isometric} if there exists a function $f\colon V(H) \rightarrow V(G)$ and $M \geq 1$ and $A \geq 0$ such that $M^{-1}\cdot d_H(u,v)-A \leq d_G(f(u), f(v)) \leq M\cdot d_H(u,v)+A$ for all $u,v \in V(H)$ and $d_G(f(V(H), w) \leq A$ for all $w \in V(G)$.} to a point, so~$K_{\aleph_0}$ inherits none of the geometric properties of the half-grid. In contexts where the geometry matters it would be desirable to have a version of \ref{HalinsGridTheorem} which preserves some of the geometry of the half-grid.

Let us first consider Cayley graphs, a class of graphs for which this problem is particularly interesting. In geometric group theory, where one is interested in the large-scale geometry of (the Cayley graph of) a group $\Gamma$, one wants the analysis of $\Gamma$ to be stable under quasi-isometries and, in particular, independent of the choice of a finite generating system. Since having a thick end is a quasi-isometric invariant for Cayley graphs, it is natural to ask whether Cayley graphs with a thick end contain a minor-model of the half-grid that survives under quasi-isometries.

For this, we consider `fat' and `asymptotic' minors, a special kind of minors that play an important role in the new area of `coarse graph theory' (see \cite{GP2023+} for an overview).
A graph $X$ is a \defn{$K$-fat} minor of a graph~$G$, for some $K \in \N$, if there is a minor-model of~$X$ in~$G$ whose branch sets and branch paths are pairwise at least $K$ apart, except that we do not require this for incident branch set-path pairs (cf.\ \cref{subsec:fatminors}). The graph~$X$ is an \defn{asymptotic minor} of~$G$ if~$X$ is a $K$-fat minor of $G$ for every $K \in \N$.

In contrast to the usual minors, asymptotic minors are preserved under quasi-isometries~\cite{GP2023+}. In particular, if a graph contains the half-grid as an asymptotic minor, then its large-scale geometry will still resemble that of a graph with a thick end.
Moreover, it does not depend on the choice of a finite generating set whether a \lf\ Cayley graph of a finitely generated group contains a fixed graph as an asymptotic minor.

A natural question that arises is whether we can improve \cref{HalinsGridTheorem} for Cayley graphs so that it yields an asymptotic half-grid minor.
This question was already posed by Georgakopoulos and Papasoglu, who asked this for Cayley graphs of one-ended, finitely generated groups. For this, note that by a result of Thomassen \cite[Proposition 5.6]{T1992}, the unique end of a one-ended, \lf\ Cayley graph must be thick. 

\begin{prob}[\rm{\cite[Problem 7.3]{GP2023+}}] \label{prob:HG:QT}
    Let $G$ be a locally finite Cayley graph of a one-ended, finitely generated group. Must the half-grid be an asymptotic minor of~$G$?
\end{prob}
 
Our main result resolves \cref{prob:HG:QT}. 
In fact, instead of constructing a $K$-fat minor-model of the half-grid for every $K \in \N$, we find a single minor-model of the half-grid which is \defn{ultra-fat}: for every $K \in \N$ its `submodel' corresponding to the subgraph of the half-grid on the vertex set $\N^2\sm [K]^2$ is $K$-fat.

\begin{mainresult} \label{main:HG:QT:OneEnded}
    Every one-ended, \qt, \lf\ graph~$G$ contains the half-grid as an ultra-fat minor.
    In particular, the half-grid is an asymptotic minor and a diverging minor of~$G$.
\end{mainresult}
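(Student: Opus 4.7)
My plan is to reduce the statement to the coarse Halin Grid Theorem (the main technical result of the paper announced in the abstract) by first producing an ultra-fat minor-model of the disjoint union of infinitely many rays in $G$, and then applying that theorem — in a form that outputs an ultra-fat, not merely asymptotic, half-grid minor — under hypotheses on the ray-model that are made automatic by quasi-transitivity.

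The first step is to construct an ultra-fat bouquet of rays. Since $G$ is one-ended and \lf, it contains a ray $R$, and by Thomassen's theorem (cited in the introduction) the unique end is thick, so infinitely many pairwise disjoint rays exist in principle. To organise them in an ultra-fat way I would use \qt ity: let $\Gamma=\Aut(G)$ act on $G$ with finitely many orbits, fix a ray $R$ and a vertex $r\in R$, and build inductively a sequence $\gamma_n\in\Gamma$ such that the rays $R_n:=\gamma_n(R)$ are pairwise disjoint and, for every $K\in\N$, all but finitely many pairs of tails of the $R_n$ are at distance at least $K$. The inductive step is feasible because the $\Gamma$-orbit of $r$ is infinite and, by local finiteness, escapes every finite ball; one-endedness then guarantees that each $R_n$ still belongs to the unique end of $G$, so the disjoint union $\bigsqcup_n R_n$ really is witnessed as a minor of $G$. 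Since all $R_n$ are translates of a single ray, the construction is uniform enough to yield the ultra-fat property.

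The second step is to invoke the coarse Halin Grid Theorem. The abstract asserts its validity for arbitrary graphs under ``additional, necessary assumptions on the minor-models of the infinite rays''; for the model $\bigsqcup_n R_n$ built above, these assumptions are automatic — $G$ is one-ended and \lf, and the ray-model is by construction ultra-fat and $\Gamma$-uniform. Feeding this into the theorem (in the version tailored to produce ultra-fat, not merely asymptotic, output) yields an ultra-fat half-grid minor of $G$.

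The ``in particular'' statements then follow formally: ultra-fat implies asymptotic by definition, and the ultra-fat property together with local finiteness implies the diverging minor condition, because any finite ball in $G$ is met by only finitely many branch sets and branch paths of the half-grid model. The main obstacle, as I see it, is the first step: it is not immediate that translates of a single ray can simultaneously be made pairwise ultra-far apart \emph{and} carry compatible branch paths witnessing the disjoint-union minor without spoiling the fatness. I expect this to be handled by a careful inductive escape argument — choosing each $\gamma_n$ so that $\gamma_n(r)$ lies outside a ball of radius growing with $n$ around the union of the previous rays, and then using one-endedness to route auxiliary paths through regions not yet used — with the uniformity provided by $\Gamma$ acting with only finitely many vertex orbits being the crucial ingredient.
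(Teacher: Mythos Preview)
Your two-step plan matches the paper's architecture exactly: first produce rays $R_0,R_1,\dots$ with $d_G(R_i,R_j)\geq\max\{i,j\}$, then feed them into \cref{thm:Halin:HalfGrid}~\ref{itm:Halin:HG:UF:Copy} to obtain an ultra-fat half-grid. The ``in particular'' clause is also handled correctly, via \cref{obs:UFHGImpliesAsympHG} and \cref{prop:UFHGImpliesDivHG}.

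The gap is in Step~1. Your construction chooses $\gamma_n$ so that $\gamma_n(r)$ lies outside a large ball around $\bigcup_{i<n}R_i$, but this controls only a single vertex of $\gamma_n(R)$, not the whole ray. Concretely, take $G=\Z^2$ and $R$ the positive $x$-axis: the translate by $(-n,0)$ moves the start of $R$ arbitrarily far from (an initial segment of) $R$, yet the translated ray \emph{contains} $R$. More generally, if $G$ admits a non-elliptic automorphism $\phi$, then by \cref{lem:NonElliptic:ExistDoubleRay} there is a $\phi$-invariant double ray, and translates of either of its tails under powers of $\phi$ will overlap rather than diverge. So the ``careful inductive escape argument'' you envisage cannot work uniformly.

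The paper resolves this with a genuine dichotomy. If all automorphisms are elliptic, then translates of a \emph{geodesic} ray do work, but only via the non-trivial \cref{lem:onlyElliptic:geodesicsDiverge}: if an elliptic $\phi$ sends $r_0$ to $r_d$ with $d\geq 4K$, then $\phi(R)$ has a tail at distance $\geq K$ from $R$ (the proof exploits ellipticity to derive a contradiction from an assumed failure). If instead some automorphism $\phi$ is non-elliptic, the paper abandons translates of a single ray altogether and instead finds the $R_i$ inside successive ``cylinders'' $G[R,K_{i}]\setminus B_G(R,K_{i-1}+i)$ around a $\phi$-invariant double ray $R$, using half-thick components (\cref{lem:withNonElliptic:halfthick,lem:withNonElliptic:main}). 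Neither case is the naive escape argument you sketch.
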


\noindent A graph $G$ is \defn{\qt} if the automorphism group of~$G$ acts on $V(G)$ with only finitely many orbits.
Note that every Cayley graph of a finitely generated group is quasi-transitive.

\cref{main:HG:QT:OneEnded} is also related to problems posed by Georgakopoulos and the second author~\cite{GH24+}, who asked for diverging minors of grids. A minor-model of a graph~$X$ in a graph~$G$ \defn{diverges}, if for every two sequences $(x_n)_{n \in \N}$ and $(y_n)_{n\in \N}$ of vertices and/or edges of~$X$ such that $d_X(x_n,y_n) \rightarrow \infty$, the distances in~$G$ between their corresponding branch sets or paths tend to infinity. It is easy to see that every ultra-fat minor-model of the half-grid diverges (see \cref{prop:UFHGImpliesDivHG}).

We remark that \cref{main:HG:QT:OneEnded} lies at the frontier of what is still true: We cannot ask for coarse embeddings\footnote{See \cref{subsec:NonElliptic} for the definition.} of the (hexagonal\footnote{See \cref{sec:Halin:Grid} for the definition.}) half-grid as e.g.\ the Cayley-graph of the lamplighter group, which is one-ended, does not contain a coarsely embedded (hexagonal) half-grid (cf.\ \cite[Discussion preceding Problem~4.2]{GH24+}).
\medskip

What about \qt, \lf\ graphs that have more than one end? 
A \qt\ graph is \emph{accessible} if there exists $n\in\N$ such that every two of its ends can be `separated' by at most~$n$ vertices.
By a standard argument about accessible, quasi-transitive graphs, we obtain that we may relax the condition on~$G$ in \cref{main:HG:QT:OneEnded} from `one-ended' to `accessible with a thick end' (see \cref{cor:HalfGrid:AccessibleGraph}).
Since every connected, \qt\ graph without thick ends is quasi-isometric to a tree by Kr\"on and M\"oller \cite[Theorem 5.5]{KM08}, this implies the following corollary:

\begin{maincorollary} \label{main:HalfGrid:QuasiIsoToTree}
    Every accessible, connected, \qt, locally finite graph is quasi-isometric to a tree if and only if it does not contain the half-grid as an ultra-fat minor.
\end{maincorollary}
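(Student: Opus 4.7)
The plan is to handle both directions of the biconditional separately, with each relying on a powerful external input: quasi-isometry invariance of asymptotic minors for one direction, and \cref{main:HG:QT:OneEnded} together with the Kr\"on--M\"oller structure theorem for the other.

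For the forward direction, I would assume $G$ is quasi-isometric to a tree $T$ and show that the half-grid cannot be an ultra-fat minor of $G$. Every ultra-fat minor is in particular an asymptotic minor, and by the quasi-isometry invariance of asymptotic minors established in \cite{GP2023+}, any asymptotic minor of $G$ is also an asymptotic minor of $T$. Taking $K=1$ in the definition of asymptotic minor yields an ordinary minor of $T$; but every ordinary minor of a tree is a forest, whereas the half-grid contains cycles. This contradiction shows that $G$ does not contain the half-grid as an ultra-fat minor.

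For the backward direction, I would argue the contrapositive: if $G$ is not quasi-isometric to a tree, then $G$ contains the half-grid as an ultra-fat minor. By the theorem of Kr\"on and M\"oller \cite[Theorem 5.5]{KM08} mentioned in the excerpt, every connected, \qt, \lf\ graph without a thick end is quasi-isometric to a tree. Hence, under our hypothesis, $G$ must possess a thick end. Combining this with the standing assumption that $G$ is accessible, \qt, and \lf, I can apply \cref{cor:HalfGrid:AccessibleGraph} (the accessible extension of \cref{main:HG:QT:OneEnded}) to conclude that $G$ contains the half-grid as an ultra-fat minor, as desired.

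Since both inputs are already available, there is little genuine obstacle left at this stage; the only point to double-check is that the argument for the forward direction really only needs the $K=1$ instance, and that \cref{cor:HalfGrid:AccessibleGraph} indeed yields an \emph{ultra-fat} (and not merely asymptotic) half-grid minor, since the equivalence is phrased in terms of ultra-fat minors on both sides. The main conceptual content of the corollary sits in \cref{main:HG:QT:OneEnded} and in the accessibility reduction, so the proof itself should be short, essentially a combination of the above two observations.
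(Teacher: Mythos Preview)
Your proposal is correct, and the backward direction (not quasi-isometric to a tree $\Rightarrow$ ultra-fat half-grid minor) is exactly the paper's argument: Kr\"on--M\"oller gives a thick end, and then \cref{cor:HalfGrid:AccessibleGraph} (which indeed yields an \emph{ultra-fat} minor) finishes it.

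For the forward direction you take a slightly different route from the paper. You argue via the quasi-isometry invariance of asymptotic minors from \cite{GP2023+}: an ultra-fat half-grid minor would give an asymptotic one, hence an asymptotic (and so ordinary) half-grid minor of the tree, contradicting acyclicity. The paper instead stays internal: any model of the half-grid in~$G$ contains infinitely many pairwise disjoint equivalent rays, so~$G$ has a thick end, and then Kr\"on--M\"oller (used in the other direction) says~$G$ is not quasi-isometric to a tree. Both arguments are short; the paper's version has the mild advantage of not needing the external quasi-isometry-invariance result (and of not worrying about whether that result applies verbatim when the target space is an abstract tree rather than a locally finite graph), while yours makes explicit the link to the coarse-minor machinery that motivates the whole paper. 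Either is fine here.
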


\cref{main:HG:QT:OneEnded} has another application.
Two rays $S,R$ in a graph $G$ \defn{diverge} if for every $K \in \N$ there exist tails $S', R'$ of $S, R$, respectively, such that $d_G(S', R') \geq K$. 
As the half-grid contains infinitely many pairwise disjoint rays, \cref{main:HG:QT:OneEnded} implies the following: 

\begin{maincorollary}\label{main:DivergingRays}
    Every \qt, \lf\ graph that contains infinitely many pairwise disjoint rays also contains infinitely many pairwise diverging rays.
\end{maincorollary}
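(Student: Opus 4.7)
The half-grid contains infinitely many pairwise disjoint rays---the rows $R_n := \{(m,n) : m \in \N\}$ for $n \in \N$---and my first claim is that in an ultra-fat minor-model of the half-grid in $G$ the corresponding branch sets and branch paths trace out infinitely many pairwise disjoint, pairwise diverging rays of $G$. Indeed, fix two rows $R_n, R_m$ and $K \in \N$ with $K > \max(n,m)$: applying the ultra-fat condition with parameter $K$ to the submodel on $\N^2 \sm [K]^2$ gives that, for all $j, j' \geq K$, the branch sets at $(j,n)$ and $(j',m)$ together with the branch paths between them lie at pairwise $G$-distance $\geq K$. Taking tails of the two $G$-rays past their $K$-th branch set yields tails at $G$-distance $\geq K$, as required. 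So whenever $G$ contains the half-grid as an ultra-fat minor, $G$ contains infinitely many pairwise diverging rays, and it suffices to exhibit such a minor.

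If $G$ happens to be one-ended this is exactly \cref{main:HG:QT:OneEnded}. In general, I would split according to whether $G$ has a thick end. If $G$ has a thick end $\omega$, the plan is to pass to a one-ended, \qt, \lf\ subgraph $H \sub G$ that retains a thick end---e.g.\ take a minimal finite cut $S$ separating $\omega$ from the other ends, let $C$ be the $\omega$-component of $G-S$, and set $H := G[V(C) \cup S]$, using the setwise stabilizer of $S$ in $\Aut(G)$ to recover quasi-transitivity of $H$---and then apply \cref{main:HG:QT:OneEnded} to $H$, obtaining the ultra-fat half-grid inside $H \sub G$. If instead $G$ has no thick end, then a pigeonhole argument shows that the infinitely many pairwise disjoint rays in $G$ must lie in infinitely many distinct ends; by Kr\"on--M\"oller \cite[Theorem 5.5]{KM08}, $G$ is then quasi-isometric to a tree $T$, and since rays in distinct ends of $T$ diverge at linear rate, transporting divergence through the quasi-isometry produces infinitely many pairwise diverging rays in $G$ directly.

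The main obstacle lies in the thick-end reduction, because \qt, \lf\ graphs are not always accessible (Dunwoody): without accessibility, the subgraph $H$ above need not retain a transitive enough action for \cref{main:HG:QT:OneEnded} to apply. A cleaner route likely goes through the more general coarse Halin theorem alluded to in the abstract, which applies to arbitrary graphs under additional necessary conditions on the minor-models of the rays---conditions which for \qt, \lf\ graphs should follow from the group action.
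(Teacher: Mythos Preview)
Your extraction of pairwise diverging rays from an ultra-fat half-grid model is correct and matches the paper's argument. The gap you identify in the thick-end reduction is real, and the paper sidesteps it entirely by a different case split.

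The paper splits not on whether $G$ has a thick end, but on whether $G$ has \emph{infinitely many ends}. If $G$ has infinitely many ends, choose rays from pairwise distinct ends; any two such rays are separated by some finite set $S$, and since $G$ is locally finite each ray has a tail outside $B_G(S,n)$ for every $n$, so they diverge---no appeal to Kr\"on--M\"oller or quasi-isometry to a tree is needed. If $G$ has only finitely many ends, then $G$ is \emph{trivially accessible} (there are only finitely many pairs of ends, each distinguished by some finite set), and infinitely many of the given disjoint rays share a common end, which is therefore thick. Now \cref{cor:HalfGrid:AccessibleGraph}, the accessible extension of \cref{main:HG:QT:OneEnded}, applies directly and yields the ultra-fat half-grid.

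Your proposed reduction---carving out a one-ended quasi-transitive subgraph around a thick end $\omega$---is essentially how the paper proves \cref{thm:QTAccess:HalfGrid} when $\omega$ is accessible (though the construction of the subgraph and the verification of quasi-transitivity are more delicate than taking a single cut and its stabiliser; see \cite[Lemma~4.6]{AH24}). The point is that for \emph{this} corollary you never have to confront an inaccessible thick end: once $G$ has infinitely many ends the elementary separation argument handles it, and with finitely many ends accessibility is automatic. Your instinct to route through the general coarse Halin theorem is therefore unnecessary here, though the question of whether the conclusion holds for inaccessible thick ends remains open (cf.\ \cref{conj:QuasiIsoToTree}).
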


\noindent This solves \cite[Conjecture 4.3]{GH24+} for quasi-transitive graphs.
\medskip

Let us now leave the realm of Cayley graphs and quasi-transitive graphs and consider arbitrary infinite graphs with thick ends. As mentioned earlier, not every graph with a thick end contains the half-grid as an asymptotic minor, and hence we can in general not hope to replace `minor' with `asymptotic minor' in \ref{HalinsGridTheorem}.
Indeed, the already mentioned $K_{\aleph_0}$ is an obvious example, but there are also locally finite graphs that have a thick end but that do not contain the half-grid as an asymptotic minor.\footnote{Let $G$ be the graph obtained from the ray $r_1r_2 \dots$ by replacing each vertex $r_i$ with a clique $K_i$ of size~$i$ and each edge $r_ir_{i+1}$ by a complete bipartite graph between $K_i$ and $K_{i+1}$. Then $G$ is locally finite by definition, and it is easy to check that $G$ has a thick end, but the half-grid is not even a $2$-fat minor of $G$.}
However, there is another formulation of \ref{HalinsGridTheorem}, which we will prove to have a coarse counterpart. For this, consider the following equivalent way of stating \ref{HalinsGridTheorem} for one-ended graphs: \emph{Let~$G$ be a one-ended graph. If $\omega\cdot R$ is a minor of~$G$, then the half-grid is a minor of~$G$.} Here, \defn{$\omega\cdot R$} denotes the disjoint union of countably many rays.

Since having $\omega\cdot R$ as an asymptotic minor is a necessary condition for having the half-grid as an asymptotic minor, but graphs with thick ends need in general not have $\omega\cdot R$ as an asymptotic minor, a more natural way to transfer \ref{HalinsGridTheorem} into the coarse world is the following, as proposed by Georgakopoulos and Papasoglu:

\begin{prob}[\rm{\cite[Problem~7.2]{GP2023+}}] \label{prob:HalinGrid}
    Let $G$ be a one-ended graph. If $\omega \cdot R$ is an asymptotic minor of~$G$, must the half-grid be an asymptotic minor of~$G$?
\end{prob}

The answer to this question is in the negative for general graphs, as we will show by presenting a counterexample in \cref{sec:CounterEx}.
However, we answer this question positively for \lf\ graphs.

\begin{mainresult} \label{main:Halin:HalfGrid}
    Let $G$ be a one-ended, locally finite graph. If $\omega \cdot R$ is an asymptotic minor of $G$, then the half-grid is an asymptotic minor of $G$.
\end{mainresult}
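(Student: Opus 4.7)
The plan is to show that for every $K\in\N$, the graph $G$ contains the half-grid as a $K$-fat minor; since $K$ is arbitrary, this establishes the half-grid as an asymptotic minor. Fix $K$ and pick a sufficiently large parameter $K''$, say $K''\geq 3K$. By hypothesis $G$ admits a $K''$-fat minor-model of $\omega\cdot R$, yielding a family $\mc R=\{R_i\}_{i\in\N}$ of pairwise disjoint rays in $G$ at pairwise distance $\geq K''$. The half-grid minor-model will use tails of (an appropriately chosen subfamily of) $\mc R$ as its columns, with the rungs (the branch paths for horizontal half-grid edges) built one at a time by induction.

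Enumerate the horizontal edges of the half-grid as $e_1, e_2, \ldots$ in a diagonal order, and the columns of the half-grid as $c_1, c_2, \ldots$. We construct a subfamily of rays $R_{i_1}, R_{i_2}, \ldots \subseteq \mc R$, with $R_{i_k}$ designated as column $c_k$, together with rungs $P_n$ for each horizontal edge $e_n$, by simultaneous induction on $n$. At the end of stage $n$ we record $F_n\subseteq V(G)\cup E(G)$ as the finite union of all rungs $P_1,\dots,P_{n-1}$ built so far. The inductive step possibly designates a new ray $R_{i_{m+1}}$ as a new column, and then constructs the rung $P_n$. The latter requires finding a path $Q$ from a tail of one column-ray $R_{i_k}$ to a tail of the next column-ray $R_{i_{k+1}}$ with $d_G(Q, F_n\setminus(R_{i_k}\cup R_{i_{k+1}}))\geq K$ and $d_G(Q, R_j)\geq K$ for every $j\notin\{i_k,i_{k+1}\}$. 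Local finiteness makes $B_K(F_n)$ finite, so one-endedness yields a unique infinite component $C$ of $G\setminus B_K(F_n)$ containing tails of every ray in $\mc R$; meanwhile $K''\geq 3K$ ensures the $K$-neighborhoods of distinct rays in $\mc R$ are pairwise disjoint. Once all rungs are built, each branch set $B_{i,j}$ is defined as the minimal subsegment of its column-ray containing the attachment points of the incident rungs, and the $K$-fatness of the resulting minor is verified directly from the construction.

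The main obstacle I anticipate is ensuring the existence of the path $Q$ while simultaneously avoiding the \emph{infinite} obstructions posed by the other rays $R_j$ ($j\notin\{i_k,i_{k+1}\}$): their $K$-neighborhoods are infinite sets which cannot simply be removed from $G$ without risking disconnecting $R_{i_k}$ from $R_{i_{k+1}}$ globally. To tackle this I would combine a Menger-type argument with an adaptive choice of the subfamily. One-endedness of $G$ provides infinitely many pairwise disjoint $R_{i_k}$-$R_{i_{k+1}}$ paths; local finiteness (any ball of bounded radius meets only finitely many disjoint paths) then forces all but finitely many of these to avoid the $K$-neighborhood of any fixed obstruction ray $R_j$ within a bounded region. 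By iterating this pigeonhole argument over the finitely many ``nearby'' obstruction rays, and by skipping and reselecting any candidate $R_{i_{m+1}}$ whose inclusion would cause an irreconcilable interaction (using the infinitude of $\mc R$), one can always find a suitable $Q$ at every stage. A sufficiently large initial choice of $K''$ relative to $K$ absorbs any constant losses, and carrying out the construction for every $K$ yields the half-grid as an asymptotic minor of $G$.
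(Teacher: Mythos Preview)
Your sketch correctly isolates the real difficulty—building rungs that stay $K$-far from the other column rays—but the proposed resolution does not close the gap. The pigeonhole argument you describe handles a \emph{fixed} obstruction ray $R_j$ only within a \emph{bounded} region, yet each $R_j$ is infinite and you need the rung $Q$ to avoid $B_K(R_j)$ along its entire length. More seriously, once three or more columns $R_{i_1},R_{i_2},R_{i_3},\dots$ are committed, you must later produce \emph{infinitely many} $R_{i_k}$--$R_{i_{k+1}}$ rungs, each avoiding $B_K(R_{i_j})$ for every other committed $j$; nothing in the argument rules out that $B_K(R_{i_{k+2}})$ separates tails of $R_{i_k}$ from tails of $R_{i_{k+1}}$ in $G\setminus B_K(F_n)$. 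Adaptive reselection cannot fix this: the obstruction is caused by a column you have \emph{already} committed to, not the one you are currently choosing, and you cannot foresee at selection time which rays will block which future rungs. Requiring $Q$ to avoid \emph{all} $R_j$ in $\mathcal R$ (not just the selected ones), as you write, is even more restrictive and can fail outright—e.g.\ when one ray of $\mathcal R$ separates two others.

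The paper takes a genuinely different route that sidesteps this obstruction. It passes to an auxiliary graph $G'$ in which the $K$-balls around the rays $R_i$ are, roughly speaking, contracted onto the rays; one then applies the Kurkofka--Melcher--Pitz strengthening of Halin's theorem to $\mathcal R$ inside $G'$, obtaining a subdivided hexagonal half-grid whose vertical rays lie in $\mathcal R$. The point is that a horizontal path of this subdivision is internally disjoint in $G'$ from every vertical ray it does not join, and by construction of $G'$ this lifts to a path in $G$ that stays at distance $\geq K$ from those vertical rays. Crucially, such a lifted path is \emph{allowed} to pass through (or near) rays of $\mathcal R$ that were \emph{not} chosen as vertical rays of the subdivision—exactly the flexibility your direct construction lacks. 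The remaining task (making the rungs pairwise $K$-far and far from initial segments of the two rays they join) is then a genuine finitary pigeonhole as you envisaged.
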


\noindent We remark that we also prove a second version of \cref{main:Halin:HalfGrid} (see \cref{thm:Halin:HalfGrid}~\ref{itm:Halin:HG:UF:Copy}) where we impose a stronger condition on the minor-model of $\omega\cdot R$, and in return obtain the half-grid as an ultra-fat minor. This result will be an important tool for our proof of \cref{main:HG:QT:OneEnded}.
\medskip

In fact, we will prove a more general version of \cref{main:Halin:HalfGrid} for arbitrary infinite (not necessarily one-ended or locally finite) graphs (see \cref{thm:Halin:HalfGrid:NotLocFin,thm:Halin:HalfGrid:UltraFat}), where we impose two further conditions on the rays in the fat minor-models of $\omega \cdot R$.
To cover graphs with more than one end, we assume that the rays in the fat minor-models of $\omega\cdot R$ are equivalent.
Further, to cover graphs with vertices of infinite degree, we assume that no two distinct rays of the fat minor-model of $\omega\cdot R$ can be separated by removing finitely many balls of finite radius.
While the first condition is clearly necessary, we will show in \cref{ex:Halin:HalfGrid:NotLocFin} that also the second condition is necessary.

\subsection{More related work}

Our work complements existing work about coarse graph theory on infinite (quasi-transitive) graphs \cite{GP2023+,AH24,M24+,L25,EGG25,LMPRQ23}. 
It is also related to existing work on characterising \qt, \lf\ graphs with thick ends, e.g.~\cite{KM08,GH24+,AH24,A11}.

In \cite{AH24} we showed that every \qt, \lf\ graph with a thick end whose cycle space is generated by cycles of bounded length\footnote{See \cite[Section~2.3]{AH24} or \cite[Section~1.9]{Bibel} for the definition. In particular, Cayley graphs of finitely presented groups have this property.} contains the full-grid (i.e.\ the grid on $\Z\times\Z$) as an asymptotic minor and as a diverging minor. Since every such graph is accessible, \cref{main:HG:QT:OneEnded} (more precisely \cref{main:HalfGrid:QuasiIsoToTree}) covers a larger class of graphs, but only yields an asymptotic and diverging minor of the half-grid. We remark that our proof techniques in this paper are mostly independent of \cite{AH24}.

Lee, Mart\'inez-Pedroza and Rodr\'iguez-Quinche \cite{LMPRQ23} introduced a coarse version of the classical cops and robbers game. 
Using the above mentioned result in~\cite{AH24}, Esperet, Gahlawat and Giocanti \cite{EGG25} proved for \lf\ Cayley graphs of finitely presented groups that if one cop cannot catch the robber in the coarse game, then infinitely many cops are needed.
Replacing in their proof the result in~\cite{AH24} by \cref{main:HG:QT:OneEnded}, this yields the same for the larger class of accessible, finitely generated groups, a result that has directly been obtained by Lehner~\cite{L25}.

Our proof of \cref{main:Halin:HalfGrid} relies on a strengthening of \cref{HalinsGridTheorem}, which was proved by Kurkofka, Melcher and Pitz in \cite{KMP2022}.

\subsection{How this paper is organised}

This paper is structured as follows.
In \cref{sec:Prelim} we recall some important definitions.
We prove \cref{main:Halin:HalfGrid} in \cref{sec:Halin:Grid} and discuss some examples regarding coarse versions of \ref{HalinsGridTheorem} in \cref{sec:CounterEx}.
In \cref{sec:QT} we deduce \cref{main:HG:QT:OneEnded} from \cref{thm:Halin:HalfGrid}, our second version of \cref{main:Halin:HalfGrid}, and in \cref{sec:ProofsOfMainResults} we prove \cref{main:DivergingRays,main:HalfGrid:QuasiIsoToTree}.

\section{Preliminaries}\label{sec:Prelim}

Our notions mainly follow~\cite{Bibel}; in particular, $\defnm{[n]} := \{1, \dots, n\}$ for $n \in \N$. In what follows, we recap some definitions which we need later.
We denote the half-grid by \defn{$HG$}.

\subsection{Paths, rays and ends}

For two sets $A,B$ of vertices of a graph~$G$, an \defn{$A$--$B$ path} is a path which meets~$A$ precisely in its first vertex and~$B$ precisely in its last vertex.

A \defn{ray} is a one-way infinite path, and a \defn{double ray} is a two-way infinite path.
A \defn{tail} of a (double) ray~$R$ is any ray $S \subseteq R$.
If $R = r_0 r_1 \dots$ is a ray, then we denote by \defn{$r_iRr_j$} for $i, j \in \N$ the subpath $r_i \dots r_j$ of~$R$, and by \defn{$r_iR$} or \defn{$R_{\geq i}$} the tail $r_i r_{i+1} \dots$ of~$R$.
Further, we denote by \defn{$Rr_i$} or \defn{$R_{\leq i}$} the subpath $r_0 \dots r_i$ of~$R$.
We use these notions analogously for double rays; in particular, if $R = \dots r_{-1}r_0r_1 \dots$ is a double ray, then \defn{$Rr_i$} and \defn{$R_{\leq i}$} denote the tail $r_ir_{i-1} \dots$ of~$R$. 

A (possibly infinite) set $U \subseteq V(G)$ \defn{separates} two rays $R,S$ in $G$ if $U$ separates $V(R)$ and $V(S)$ in~$G$, i.e.\ if $G-U$ does not contain an $R$--$S$ path.

An \defn{end} $\eps$ of a graph~$G$ is an equivalence class of rays in~$G$ where two rays are equivalent if they are joined by infinitely many disjoint paths in~$G$ or, equivalently, if for every finite set~$U \subseteq V(G)$ both rays have tails in the same component of~$G - U$.
If a ray lies in~$\eps$, then we call it an \defn{$\eps$-ray}.
An end~$\eps$ is \defn{thick} if there are infinitely many pairwise disjoint $\eps$-rays \cite{halin65}.

\subsection{Distance and balls}

Let $G$ be a graph.
We write~\defn{$d_G(v, u)$} for the distance of the two vertices~$v$ and~$u$ in~$G$. 
For two sets~$U$ and~$U'$ of vertices of~$G$, we write~\defn{$d_G(U, U')$} for the minimum distance of two elements of~$U$ and~$U'$, respectively.
If one of~$U$ or~$U'$ is just a singleton, then we omit the braces, writing $d_G(v, U') := d_G(\{v\}, U')$ for $v \in V(G)$.
If $X$ is a subgraph of $G$, then we abbreviate $d_G(U,V(X))$ as $d_G(U,X)$.

Given a set~$U$ of vertices of~$G$, the \defn{ball (in~$G$) around~$U$ of radius $r \in \N$}, denoted by~\defn{$B_G(U, r)$}, is the set of all vertices in~$G$ of distance at most~$r$ from~$U$ in~$G$.
If~$U = \{v\}$ for some~$v \in V(G)$, then we omit the braces, writing~$B_G(v, r)$ for the ball (in $G$) around~$v$ of radius~$r$.
Additionally, we abbreviate the induced subgraph on $B_G(U,r)$ of $G$ with $\defnm{G[U,r]} := G[B_G(U,r)]$.
If $X$ is a subgraph of $G$, then we abbreviate $B_G(V(X),r)$ and $G[V(X),r]$ as $B_G(X,r)$ and $G[X,r]$, respectively.

\subsection{Fat, asymptotic and diverging minors} \label{subsec:fatminors}

Let $G, X$ be graphs.
A \defn{(minor-)model} $(\cV,\cE)$ of~$X$ in~$G$ is a collection $\cV$ of disjoint sets $V_x \subseteq V(G)$ for vertices~$x$ of~$X$ such that each $G[V_x]$ is connected, and a collection~$\cE$ of internally disjoint $V_{x_0}$--$V_{x_1}$ paths $E_{e}$ for edges $e=x_0x_1$ of~$X$ such that each $E_{e}$ is disjoint from every~$V_x$ with $x \notin e$.
The $V_x$ are its \defn{branch sets} and the $E_{e}$ are its \defn{branch paths}.

Let $K \in \N$.
A model $(\cV, \cE)$ of $X$ in~$G$ is \defn{$K$-fat} if $d_G(Y,Z) \geq K$ for every two distinct $Y,Z \in \cV \cup \cE$ unless $Y = E_e$ and $Z = V_x$ for some vertex $x \in V(X)$ incident to $e \in E(X)$, or vice versa.
The graph~$X$ is a \defn{$K$-fat minor} of~$G$, denoted by \defn{$X \prec_K G$}, if~$G$ contains a $K$-fat model of~$X$. Moreover,~$X$ is an \defn{asymptotic minor} of~$G$, denoted by \defn{$X \prec_\infty G$}, if~$X$ is a $K$-fat minor of~$G$ for all $K \in \N$.

Let $\eps$ be an end of $G$. If $X$ is a one-ended graph, and $(\cV, \cE)$ is a model of $X$ in $G$, then we say that \defn{all rays of $(\cV, \cE)$ are $\eps$-rays} if every ray in $\bigcup_{x \in V(X)} G[V_x] \cup \bigcup_{e \in E(X)} E_e$ is an $\eps$-ray. 
We write \defn{$X \prec^\eps_K G$} if $G$ contains a $K$-fat model $(\cV, \cE)$ of $X$ whose rays are all $\eps$-rays, and \defn{$X \prec_\infty^\eps G$} if $X \prec^\eps_K G$ for all $K \in \N$.

Let $(\cV, \cE)$ be a model of a graph $X$ in a graph $G$.
Then $(\cV, \cE)$ \defn{diverges} (in~$G$) if for every two sequences $(x_n)_{n \in \N}$ and $(y_n)_{n\in \N}$ of vertices and/or edges of $X$ such that $d_X(x_n,y_n) \rightarrow \infty$, we have $d_G(U_n, W_n) \rightarrow \infty$ where $U_n := V_{x_n}$ if $x_n \in V(X)$ and $U_n := V(E_{x_n})$ if $x_n \in E(X)$ and analogously $W_n := V_{y_n}$ or $W_n := V(E_{y_n})$.

\subsection{Ultra-fat half-grid  minors}

Let $(\cV, \cE)$ be a model of the half-grid in a graph~$G$. Denote by $(\cV_K, \cE_K)$ the submodel of $(\cV, \cE)$ that consists precisely of all branch sets in $\cV$ which correspond to vertices $(n,m)$ of the half-grid with $n \geq K$ or $m \geq K$ and of precisely all branch paths in $\cE$ which correspond to edges $(n,m)(n',m')$ of the half-grid with $n,n' \geq K$ or $m,m'\geq K$, ie.\ $(\cV_K, \cE_K)$ corresponds to the induced subgraph of the half-grid with vertex set $\N^2\sm [K]^2$.
We say that $(\cV, \cE)$ is \defn{ultra-fat} if for every $K \in \N$ the model $(\cV_K, \cE_K)$ is $K$-fat. The half-grid is an \defn{ultra-fat minor} of~$G$, denoted by \defn{$HG \prec_{UF} G$}, if~$G$ contains an ultra-fat model of the half-grid.
For an end $\eps$ of~$G$, we write \defn{$HG \prec^\eps_{UF} G$} if $G$ contains an ultra-fat model $(\cV, \cE)$ of~$HG$ whose rays are all $\eps$-rays.

The following observation is immediate from the definitions:

\begin{obs} \label{obs:UFHGImpliesAsympHG}
    Let $\eps$ be an end of a graph $G$. If $HG \prec_{UF}^\eps G$, then $HG \prec_\infty^\eps G$. \qed
\end{obs}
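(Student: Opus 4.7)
The plan is to unpack the definitions: given an ultra-fat model $(\cV,\cE)$ of $HG$ in $G$ all of whose rays are $\eps$-rays, I would produce, for each $K\in\N$, a $K$-fat model of $HG$ in $G$ with the same $\eps$-ray property. This directly yields $HG\prec_\infty^\eps G$.

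Fix $K\in\N$. By the definition of ultra-fat, the submodel $(\cV_K,\cE_K)$ is already $K$-fat. However, it is not a model of $HG$ itself but only of the induced subgraph $HG[\N^2\sm [K]^2]$. To convert this into a $K$-fat model of $HG$, I would observe that this induced subgraph contains an isomorphic copy of $HG$, for instance the subgraph on the vertex set $\{(n,m):n,m\geq K\}$, which is isomorphic to $HG$ via an appropriate shift. Restricting $(\cV_K,\cE_K)$ to precisely those branch sets and branch paths that correspond to the vertices and edges of this copy yields a model $(\cV',\cE')$ of $HG$ in $G$.

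It then remains to check two things. First, $(\cV',\cE')$ inherits $K$-fatness from $(\cV_K,\cE_K)$: pairwise distances between retained branch sets and paths are unchanged when we discard the others, and the exceptional incident branch set--path pairs of $(\cV',\cE')$ are a subset of those of $(\cV_K,\cE_K)$. Second, for the $\eps$-ray condition, any ray in $\bigcup_{x\in V(HG)}G[V_x']\cup\bigcup_{e\in E(HG)}E_e'$ is also a ray in $\bigcup_{x\in V(HG)}G[V_x]\cup\bigcup_{e\in E(HG)}E_e$, and hence an $\eps$-ray by hypothesis on $(\cV,\cE)$.

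I do not expect a genuine obstacle — this really is immediate from the definitions — but the one point to be careful about is the indexing convention between $\N$ and $[K]^2$ in the definition of $(\cV_K,\cE_K)$, so I would make sure the shift identifying the extracted copy of $HG$ with $HG$ itself is consistent with that convention.
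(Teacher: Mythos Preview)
Your proposal is correct and is precisely the unpacking of definitions the paper has in mind; the paper itself gives no argument beyond the \qed, treating the observation as immediate. Your caution about the indexing of $[K]^2$ versus $\N^2$ is well placed but inessential, since any shifted copy of $HG$ inside $HG[\N^2\sm[K]^2]$ will do.
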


Moreover, we have the following proposition:

\begin{prop} \label{prop:UFHGImpliesDivHG}
    Let $G$ be a graph, and let $(\cV, \cE)$ be an ultra-fat model of the half-grid in~$G$. If all its branch sets are finite, then $(\cV, \cE)$ diverges.
\end{prop}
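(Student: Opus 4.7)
The plan is to argue by contradiction. Suppose $(\cV,\cE)$ does not diverge; then, after passing to a subsequence, there exist sequences $(x_n),(y_n)$ of vertices and/or edges of $HG$ with $d_{HG}(x_n,y_n)\to\infty$ but $d_G(U_n,W_n)\le D$ for all $n$, where $U_n,W_n$ are the branch set/path corresponding to $x_n,y_n$ and $D\in\N$ is some constant. Since $d_{HG}(x_n,y_n)\to\infty$, for all sufficiently large $n$ the elements $x_n$ and $y_n$ are not an incident vertex--edge pair in $HG$, so $U_n$ and $W_n$ are non-incident as elements of the model in the fat-minor sense.

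The crux is the following sub-claim: for every finite $V\subseteq V(G)$ with $\diam_G(V)<\infty$ (which holds whenever $V$ is a branch set or the vertex set of a branch path) and every $D\in\N$, only finitely many $y\in V(HG)\cup E(HG)$ satisfy $U_y\cap B_G(V,D)\ne\emptyset$. To prove it, choose $K'\in\N$ with $K'>2D+\diam_G(V)$. Any two distinct elements $Y_1,Y_2$ of $(\cV_{K'},\cE_{K'})$ both meeting $B_G(V,D)$ would satisfy $d_G(Y_1,Y_2)\le 2D+\diam_G(V)<K'$, so $K'$-fatness of $(\cV_{K'},\cE_{K'})$ forces them to be an incident (branch set, branch path)-pair. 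Since two distinct branch sets, or two distinct branch paths, are never incident in the fat-minor sense, at most one branch set and at most one branch path of $(\cV_{K'},\cE_{K'})$ can meet $B_G(V,D)$. The remaining elements of $(\cV,\cE)\setminus(\cV_{K'},\cE_{K'})$ correspond to a bounded region of $HG$ around the ``corner'' $[K']^2$, of which there are only finitely many (each finite by hypothesis). This proves the sub-claim.

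To conclude, distinguish cases. If $(x_n)$ has a bounded subsequence in $HG$, pass to it; then $U_n$ takes only finitely many values, so on a further subsequence $U_n$ equals a fixed finite set $V$ with $\diam_G(V)<\infty$. The sub-claim, applied with this $V$, forces $(y_n)$ to lie in a finite subset of $V(HG)\cup E(HG)$; but since $d_{HG}(x_n,y_n)\to\infty$ and $(x_n)$ is bounded, $(y_n)$ must leave every finite set, a contradiction. The symmetric case with $(y_n)$ bounded is analogous. Otherwise both $(x_n)$ and $(y_n)$ tend to infinity in $HG$, so for $K:=D+1$ eventually both $U_n$ and $W_n$ lie in $(\cV_K,\cE_K)$; combined with non-incidence, $K$-fatness then gives $d_G(U_n,W_n)\ge K>D$, contradicting $d_G(U_n,W_n)\le D$.

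The main technical point is the sub-claim, and this is precisely where the full strength of ultra-fatness is used: the threshold $K'$ depends on the diameter of the branch set/path $V$ produced by the bounded-sequence case, so it must be chosen after $V$ is fixed. Plain $K$-fatness for any single $K$ chosen in advance would not give enough room to rule out the bounded-sequence scenario.
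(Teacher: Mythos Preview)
Your argument is correct and follows essentially the same route as the paper: assume divergence fails, pass to a subsequence with $d_G(U_n,W_n)$ bounded, use ultra-fatness once to force one of the two sequences to stay in the finite corner $[K]^2$ (your ``both unbounded'' case is the contrapositive of this), and then use ultra-fatness a second time together with the finiteness of the fixed branch set/path to bound the other sequence, contradicting $d_{HG}(x_n,y_n)\to\infty$. Your explicit sub-claim is exactly the content of the paper's pigeonhole step (find $v\in U_z$ with $d_G(v,W_n)=K$ for infinitely many $n$, whence $d_G(W_n,W_m)\le 2K$); you have simply packaged it more carefully and chosen the threshold $K'>2D+\diam_G(V)$ explicitly.
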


\begin{proof}
    The proof is analogous to \cite[Proposition~3.2]{AH24}, we repeat the details here for clarity.

    Let $(x_n)_{n\in \N}$ and $(y_n)_{n \in \N}$ with $x_n, y_n \in V(HG) \cup E(HG)$ for all $n \in \N$ such that $d_{HG}(x_n, y_n) \rightarrow \infty$. Let $U_n, W_n$ be the branch sets or paths of $(\cV, \cE)$ corresponding to $x_n, y_n$, respectively. 
    Now suppose for a contradiction that $d_G(U_n, W_n)$ does not tend to infinity. Then we may assume, by restricting to subsequences, that $d_G(U_n, W_n) = K$ for some $K \in \N$ and all $n \in \N$. 
    Since $(\cV, \cE)$ is ultra-fat, this implies that, for all $n \in \N$, at least one of $x_n, y_n$ is contained in $HG[[K]^2]$.
    Hence, one of $(x_n)_{n \in \N}$ and $(y_n)_{n \in \N}$, say $(x_n)_{n \in \N}$, is eventually constant, so we may assume that $x_n = z$ for all $n \in \N$ and some $z \in V(HG) \cup E(HG)$. 
    Since $d_G(x_n, y_n) \rightarrow \infty$, it follows (after possibly restricting to a subsequence of~$(y_n)_{n \in \N}$) that the $y_n$ are pairwise distinct.
    Set $U_z := V_z$ if $z \in V(HG)$ or $U_z := V(E_z)$ if $z \in E(HG)$.
    As~$U_z$ is finite and $d_G(U_z, W_n) = K$ for all $n \in \N$, there is some $v \in U_z$ and an infinite index set $I \subseteq \N$ such that $d_G(v, W_n) = K$ for all $n \in I$. Hence, $d_G(W_n, W_m) \leq 2K$ for all $n,m \in \N$. But since $I$ is infinite and the $W_n$ are pairwise distinct, this contradicts that $(\cV, \cE)$ is ultra-fat.
\end{proof}

\section{Proof of Coarse Halin Grid Theorem}\label{sec:Halin:Grid}

In this section we prove two coarse versions of \ref{HalinsGridTheorem}. Let us first state them for locally finite graphs.

\begin{thm} \label{thm:Halin:HalfGrid}
    Let $\eps$ be an end of a \lf\ graph~$G$. 
    \begin{enumerate}[label=\rm{(\arabic*)}]
        \item \label{itm:Halin:HG:Asymp:Copy} If $\omega \cdot R \prec^\eps_{2K-1} G$ for some $K \in \N$, then $HG \prec^\eps_K G$. In particular, if $\omega \cdot R \prec^\eps_\infty G$, then $HG \prec^\eps_\infty G$.
        \item \label{itm:Halin:HG:UF:Copy} If $G$ contains $\eps$-rays $R_0, R_1, \dots$ such that $d_G(R_i, R_j) \geq \max\{i,j\}$ for all $i \neq j$, then $HG \prec_{UF}^\eps G$.
    \end{enumerate}
\end{thm}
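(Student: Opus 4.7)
The plan is to construct a half-grid minor-model using (tails of) the given $\eps$-rays as the columns, and to build the horizontal rungs by an inductive greedy procedure exploiting local finiteness and the fact that all rays lie in the same end $\eps$. In case~\ref{itm:Halin:HG:Asymp:Copy}, from the $(2K-1)$-fat model of $\omega\cdot R$ (whose rays all lie in $\eps$ by assumption), I extract pairwise disjoint $\eps$-rays $S_0,S_1,\dots$ with $d_G(S_i,S_j)\geq 2K-1$ by picking one $\eps$-ray inside each branch-subgraph of the model associated with a vertex of $\omega\cdot R$. In case~\ref{itm:Halin:HG:UF:Copy} I set $S_i := R_i$, so that $d_G(S_i,S_j)\geq\max\{i,j\}$ is already given.

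\textbf{Inductive construction.} I build a model $(\cV,\cE)$ where each branch set $V_{(i,j)} = \{v^i_j\}$ is a single anchor vertex on $S_i$, the column branch path for $(i,j)(i,j+1)$ is the segment of $S_i$ between $v^i_j$ and $v^i_{j+1}$, and the rung branch path for $(i,j)(i+1,j)$ is an $S_i$-$S_{i+1}$ path $P^i_j$. I process the half-grid row by row ($j=0,1,2,\dots$), and within each row from the inside out. Two facts drive the inductive step: (a)~each $S_i$ is an $\eps$-ray, so the next anchor can be pushed arbitrarily far along $S_i$, beyond any finite set; and (b)~since all $S_i$ lie in $\eps$ and $G$ is locally finite, there are infinitely many pairwise disjoint $S_i$-$S_{i+1}$ paths, and here I plan to invoke the Kurkofka-Melcher-Pitz strengthening of \ref{HalinsGridTheorem}~\cite{KMP2022} to additionally arrange them monotonically along both rays. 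Using (a) and (b), each new rung can be chosen to attach beyond all previous rungs on its two columns and to avoid the required neighborhood of everything previously placed.

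\textbf{Fatness verification.} In case~\ref{itm:Halin:HG:Asymp:Copy}, the columns are automatically $\geq 2K-1\geq K$ apart; I maintain $K$-fatness of the model by choosing each new rung to avoid the $(K-1)$-neighborhood of all previously placed non-incident objects (only finitely many vertices are forbidden, by local finiteness), and by pushing each new anchor far enough along its column that the adjacent column segments remain $\geq K$ from all non-incident rungs. In case~\ref{itm:Halin:HG:UF:Copy}, ultra-fatness demands progressively stronger distance constraints as $\max\{i,j\}$ grows; the inequality $d_G(S_i,S_j)\geq\max\{i,j\}$ handles the column pieces, and the same local-finiteness argument supplies enough room to route rungs with increasing fatness as the construction expands.

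\textbf{Main obstacle.} The main technical difficulty is the combinatorial bookkeeping: when a new rung $P^i_j$ is placed, it must simultaneously satisfy fatness against every previously chosen branch set and branch path, while its endpoints must sit far enough along the columns that every future column segment between consecutive anchors on $S_i$ and $S_{i+1}$ remains valid. In case~\ref{itm:Halin:HG:UF:Copy} this is further complicated by the growing distance thresholds. However, only finitely many constraints appear at each step, and local finiteness guarantees that the infinitely many candidate rungs between $S_i$ and $S_{i+1}$ eventually escape any finite forbidden neighborhood, so a careful diagonal-order enumeration of the half-grid yields both models simultaneously.
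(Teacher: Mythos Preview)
Your greedy scheme has a real gap: you claim that when placing a rung $P^i_j$ ``only finitely many vertices are forbidden, by local finiteness'', but this is not so. The rung must stay at distance $\geq K$ not just from the finitely many column \emph{segments} already laid down, but from the entire future tail of every non-adjacent column~$S_k$ --- because those columns will eventually extend along all of $S_k$ beyond $v^k_0$. That is an \emph{infinite} forbidden region, and there is in general no reason why any $S_i$--$S_{i+1}$ path avoids it. Concretely, take the graph built from rays $R_0,R_1,\dots$ by joining, for every $n$, vertex~$n$ of~$R_i$ to vertex~$n$ of~$R_{i+1}$ by a path of length $2K-1$; if you happen to enumerate your columns as $S_0:=R_0$, $S_1:=R_2$, $S_2:=R_1,\dots$, then \emph{every} $S_0$--$S_1$ path in~$G$ passes through $S_2=R_1$, so no rung between columns~$0$ and~$1$ can be at distance $\geq K$ from column~$2$. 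Your proposal fixes the column assignment $i\mapsto S_i$ at the outset and never revisits it, so it cannot recover from a bad ordering.

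This is exactly the obstacle the paper singles out before \cref{lem:Halin:HG:Technical}, and the fix is not bookkeeping but a genuine idea: pass to an auxiliary graph~$G'$ obtained by ``contracting'' the $(K_k-1)$-balls around the rays~$R_k$ onto~$R_k$, check that the rays remain equivalent in~$G'$, and only \emph{then} apply \cite{KMP2022} in~$G'$. The point is that KMP now hands you both a suitable \emph{subset} of the rays together with the correct linear order on them (resolving the ordering problem), \emph{and} horizontal paths which, once lifted back to~$G$, are automatically at distance $\geq K_k$ from every non-adjacent vertical ray~$S_k$ (resolving the infinite-forbidden-region problem). Your use of KMP --- merely to get monotone paths between two fixed adjacent columns --- misses this; the real work KMP does here is selecting which rays become columns and in what order, inside the contracted graph.
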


\noindent We remark that we show in \cref{ex:Halin:HalfGrid:sharp} that the $2K-1$ in \ref{itm:Halin:HG:Asymp:Copy} is necessary.

The first part of \cref{thm:Halin:HalfGrid} immediately implies \cref{main:Halin:HalfGrid}, which we restate here for convenience.

\begin{customthm}{\cref*{main:Halin:HalfGrid}}
    \emph{Let $G$ be a one-ended, locally finite graph. If $\omega \cdot R$ is an asymptotic minor of $G$, then the half-grid is an asymptotic minor of $G$.}
\end{customthm}

\begin{proof}[Proof of \cref{main:Halin:HalfGrid} given \cref{thm:Halin:HalfGrid}]
    Since $G$ is one-ended, $\omega\cdot R \prec_\infty G$ implies $\omega\cdot R \prec_\infty^\eps G$ for the unique end $\eps$ of $G$. The assertion now follows from \cref{thm:Halin:HalfGrid}~\ref{itm:Halin:HG:Asymp:Copy}.
\end{proof}

In fact, we prove a more general version of \cref{thm:Halin:HalfGrid} (see \cref{thm:Halin:HalfGrid:NotLocFin,thm:Halin:HalfGrid:UltraFat} below) for not necessarily locally finite graphs, where we impose a further condition on the rays in the fat model of $\omega \cdot G$. 

Since a $K$-fat model of the half-grid contains between any two of its rays infinitely many paths that are pairwise at least $K$ apart, a necessary condition for containing a $K$-fat model of the half-grid whose rays are all $\eps$-rays is that there are infinitely many $\eps$-rays which pairwise cannot be separated by removing finitely many sets of vertices that all have diameter at most $K-1$. 
We will present in \cref{ex:Halin:HalfGrid:NotLocFin} a graph that verifies this statement, showing that \cref{thm:Halin:HalfGrid} is false for arbitrary (not locally finite) graphs without this additional assumption.
Conversely, we will prove that a slightly stronger condition already suffices to guarantee a $K$-fat half-grid minor.

Before we can state our two coarse versions of \ref{HalinsGridTheorem} for arbitrary graphs, we need another definition. We say that a ray $R = r_0r_1 \dots$ in a graph $G$ is \defn{$K$-fat} for some $K\in\N$ if for every $n \in \N$ there exists some $N \in \N$ such that $d_G(Rr_n, r_NR) \geq K$. It is easy to see that this is the case if and only if there exist (finite) sets $B_i \subseteq V(R)$ such that $\bigcup_{i \in \N} B_i = V(R)$ and such that the~$B_i$ with even~$i$ form the branch sets and the~$B_i$ with odd~$i$ form the branch paths of a $K$-fat model of the ray (cf.\ \cite[Lemma~6.2]{GP2023+}). Moreover, every ray in a locally finite graph is $K$-fat for every $K \in \N$.

\begin{thm} \label{thm:Halin:HalfGrid:NotLocFin}
    Let $\eps$ be an end of a graph~$G$, and let $K \in \N$. Suppose that there is an infinite collection~$\cR$ of $(2K-1)$-fat $\eps$-rays in~$G$ that are pairwise at least $2K-1$ apart and such that no two rays in~$\cR$ can be separated in~$G$ by removing finitely many balls of radius at most $4K-4$ in~$G$. Then $HG \prec^\eps_K G$.
\end{thm}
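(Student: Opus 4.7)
The plan is to adapt the proof of the classical Halin Grid Theorem (in particular, the strengthening by Kurkofka, Melcher and Pitz \cite{KMP2022}) to the coarse setting, with the ball-separation hypothesis taking the place of the local finiteness used in \cref{thm:Halin:HalfGrid}. First, I enumerate $\cR = \{R_0, R_1, \ldots\}$ and use the $(2K-1)$-fatness of each $R_i$ to decompose it into alternating finite subpaths $B_{i,0}, C_{i,0}, B_{i,1}, C_{i,1}, \ldots$, where the $B_{i,j}$ will serve as the branch sets on row $i$ of the half-grid model and the $C_{i,j}$ as the horizontal branch paths. The cuts on each $R_i$ can be chosen so that any two distinct $B_{i,j}, B_{i,j'}$ are at distance $\geq K$ in $G$, and each $C_{i,j}$ is at distance $\geq K$ from every $B_{i,k}$ with $k \neq j, j+1$; the pairwise distance $\geq 2K-1$ between rays in $\cR$ then yields the analogous $K$-fatness conditions across different rows.

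The remaining task is to find, for each vertical edge of the half-grid between rows $i$ and $i+1$, a branch path $P$ from a prescribed $B_{i,j}$ to a prescribed $B_{i+1,j'}$ in a $K$-fat way. I would do this iteratively by a Menger-type argument. At each stage, the previously constructed material consists of finitely many branch sets (each a bounded subpath of some ray) together with finitely many vertical branch paths (each a finite path in $G$). Its $(K-1)$-neighborhood can therefore be covered by a finite collection of balls of radius at most $4K-4$, so by the ball-separation hypothesis there is still an $R_i$--$R_{i+1}$ path in $G$ avoiding these balls, and hence at distance $\geq K$ from all previously chosen material. This path can then be extended along tails of $R_i$ and $R_{i+1}$ to reach the prescribed branch sets.

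The main obstacle I expect is coordinating the iterative construction of vertical paths with the decomposition of the rays: when a new $R_i$--$R_{i+1}$ path is found, its endpoints may not lie within the branch sets we wish to connect, and extending along the rays could violate $K$-fatness with respect to earlier material. My approach is to interleave the two steps, deferring the choice of cuts on each ray until the relevant part of the half-grid model is constructed; this allows us to push new cuts far enough out along each ray so that the endpoints of every new vertical path can be reached through an as-yet-unused segment. Since each row of the final model is a tail of some $R_i \in \cR$ and hence an $\eps$-ray, this construction yields $HG \prec_K^\eps G$ as required.
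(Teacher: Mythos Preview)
Your iterative scheme has a genuine gap: it does not control the distance between a new vertical branch path $P$ and the \emph{rays} $R_k$ with $k\neq i,i+1$. The ball-separation hypothesis lets you avoid finitely many balls, hence all previously fixed branch sets and vertical paths; but each row of your model is (a tail of) an entire ray, which cannot be covered by finitely many balls of radius $4K-4$. Concretely, suppose you have already built a vertical path $Q$ from $R_0$ to $R_1$ landing at $q\in R_1$, and later you construct a vertical path $P$ between $R_2$ and $R_3$. You make $P$ avoid the neighbourhood of $Q$ and of the used initial segment of $R_1$; but nothing prevents $P$ from passing within $K-1$ of some vertex $p$ on the tail of $R_1$ beyond $q$. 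Now either some future branch set or horizontal branch path on $R_1$ contains a point near $p$ and is too close to $P$, or you push all future material on $R_1$ past $p$ --- but then the horizontal branch path on $R_1$ linking the last branch set before $p$ to the first one after $p$ runs through $p$ and is itself within $K-1$ of $P$. Deferring the cuts does not resolve this; the conflict is between the already-fixed $P$ and any connected model of row~$1$ that extends past the point to which $Q$ has committed you.

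The paper addresses precisely this obstacle by a different mechanism. It first builds an auxiliary graph $G'$ in which, roughly, the $(K-1)$-balls around the rays are contracted onto the rays, and then applies the Kurkofka--Melcher--Pitz theorem in $G'$ rather than in $G$. A horizontal path of the resulting hexagonal half-grid subdivision in $G'$ avoids the other vertical rays by definition of a subdivision, and therefore its lift to $G$ is automatically at distance $\geq K$ from every ray in the selected subset $\cS\subseteq\cR$ other than the two it joins --- exactly the condition your argument is missing. Only after this structural guarantee is in place does the paper use the ball-separation hypothesis and the $(2K-1)$-fatness of the rays to arrange the remaining pairwise distance conditions among the horizontal paths.
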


\begin{thm} \label{thm:Halin:HalfGrid:UltraFat}
    Let $\eps$ be an end of a graph~$G$, and suppose that there are $\eps$-rays $R_0, R_1, \dots$ in~$G$ that are $K$-fat for all $K\in\N$ and such that $d_G(R_i, R_j) \geq \max\{i,j\}$ for all $i \neq j \in \N$ and such that no two rays $R_i, R_j$ can be separated in~$G$ by removing finitely many balls of finite radius. Then $HG \prec^\eps_{UF} G$.
\end{thm}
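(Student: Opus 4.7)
The plan is to construct the ultra-fat half-grid model by a direct inductive construction, placing each branch set on its own distinct ray from the given collection so that rays assigned to vertices of $HG$ far from the corner are far from rays assigned to other vertices.

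I would begin by reformulating the ultra-fat condition pairwise: $(\cV,\cE)$ is ultra-fat if and only if for every two distinct non-incident model pieces $Y,Z$, at \emph{positions} $p_Y, p_Z$ respectively (where the position of a vertex $(m,n)$ is $\max(m,n)$ and the position of an edge $e$ is the minimum of the positions of its endpoints), we have $d_G(Y,Z) \geq \min(p_Y, p_Z)$. This is because $\min(p_Y, p_Z)$ is precisely the largest $K$ for which both $Y$ and $Z$ lie in the submodel $(\cV_K, \cE_K)$.

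Next I would fix an injection $N \colon V(HG) \to \N$ with $N_{(m,n)} \geq \max(m,n)$ for all $(m,n)$, and commit to placing each branch set $V_{(m,n)}$ on (a piece of) the ray $R_{N_{(m,n)}}$. By the hypothesis $d_G(R_i, R_j) \geq \max(i,j)$, any two branch sets at positions $p_Y, p_Z$ then automatically lie on rays at distance at least $\max(p_Y, p_Z) \geq \min(p_Y, p_Z)$, so the pairwise fatness required for branch sets comes for free. I would then construct the branch sets and paths by induction on position $k$, enumerating vertices and edges of $HG$ in this order. For a new branch set $V_{(m,n)}$ of position $k$, I would use the $K$-fatness of $R_{N_{(m,n)}}$ (valid for any $K$) to pick a piece of the ray that is $k$-far from all the finitely many previously constructed pieces and long enough to later host four pairwise $k$-distant \emph{docking points} for its incident branch paths. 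For a new branch path $E_e$ of position $k$ joining $V_{(a,b)}$ and $V_{(c,d)}$, I would apply the no-finite-ball-separation hypothesis to the rays $R_{N_{(a,b)}}$ and $R_{N_{(c,d)}}$ to find a path between them in $G$ avoiding the $k$-neighbourhood of all previously constructed pieces, and adjust its endpoints along the two rays so they land at docking points inside the corresponding branch sets.

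The main obstacle is controlling where the branch paths land inside their endpoint branch sets. The connecting path provided by the no-finite-ball-separation hypothesis lands on the two rays at essentially arbitrary vertices; adjusting it along the rays to reach the pre-chosen docking points must not trespass through the $k$-neighbourhood of other branch paths incident to the same branch set. I would address this by planning all four potential docking points in advance when choosing each branch set $V_{(m,n)}$, and by enlarging the set of balls avoided in the no-finite-ball-separation argument to also exclude the finite portions of $R_{N_{(a,b)}}$ and $R_{N_{(c,d)}}$ that lie between the existing docking points and the rest of the ray. The ultra-fat property of the resulting model then follows immediately from the pairwise reformulation.
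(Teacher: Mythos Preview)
Your approach is genuinely different from the paper's: the paper places all branch sets of a given vertical ray of $HG$ on the \emph{same} ray $R_i$, obtains a hexagonal half-grid subdivision via the Kurkofka--Melcher--Pitz theorem applied to an auxiliary graph (\cref{lem:Halin:HG:Technical}), and only at the very end performs a ``rotation'' trick to upgrade column-wise fatness to ultra-fatness. Your plan of putting every branch set on its own ray is an attractive alternative, and the pairwise reformulation of ultra-fatness is correct.

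However, the proposal has a real gap at exactly the point you flag as the main obstacle. After the no-ball-separation hypothesis yields a path $P$ landing at some $u\in R_{N_x}$, you must walk along $R_{N_x}$ from $u$ back to the prechosen docking point $d^e_x\in V_x$. The difficulty is that between the step at which $V_x$ was placed and the current step, \emph{other} branch paths $E_{e'}$ (not incident to~$x$) may have been constructed, and nothing prevents such an $E_{e'}$ from coming within $\min(p_e,p_{e'})$ of $R_{N_x}$ at some index strictly past $V_x$; your walk from $u$ down to $d^e_x$ then passes through that region and violates $d_G(E_e,E_{e'})\geq\min(p_e,p_{e'})$. Your fix---excluding a finite initial segment of the ray from the balls---only controls where $P$ \emph{lands}, not what the walk crosses on the way back. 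You cannot exclude the whole tail of $R_{N_x}$ when building $E_{e'}$ (it is infinite, and the hypothesis only lets you remove finitely many finite balls), and you cannot postpone $V_x$ until all incident edges are done without creating circular dependencies (each vertex has edges to vertices of strictly larger position). There is also a smaller ordering issue: an edge of position $k$ can have an endpoint of position $k+1$, so ``enumerate by position'' produces edges before one of their branch sets exists. Both problems are what force the paper into its more elaborate auxiliary-graph construction, where the horizontal paths are found \emph{first} (with controlled distances to all rays via the contracted graph $G'$) and only then assembled into a subdivision.
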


Let us first show that \cref{thm:Halin:HalfGrid:NotLocFin,thm:Halin:HalfGrid:UltraFat} imply \cref{thm:Halin:HalfGrid}. 

\begin{proof}[Proof of \cref{thm:Halin:HalfGrid} given \cref{thm:Halin:HalfGrid:NotLocFin,thm:Halin:HalfGrid:UltraFat}]
    \ref{itm:Halin:HG:Asymp:Copy}: Take any model $(\cV, \cE)$ of $\omega \cdot R$ that witnesses ${\omega \cdot R \prec^\eps_{2K-1} G}$. Since every vertex of $\omega \cdot R$ has degree at most $2$, we may assume that the branch sets in~$\cV$ are finite paths which start and end in the endvertices of the incident branch paths. Then $(\cV, \cE)$ yields an infinite collection $\cR$ of $(2K-1)$-fat $\eps$-rays that are pairwise at least $2K-1$ apart. Since all rays in $\cR$ are $\eps$-rays, no two of them can be separated by removing finitely many vertices. Hence, as $G$ is locally finite, they can neither be separated by removing finitely many balls of radius at most $4K-4$. Thus, $\cR$ satisfies the premise of \cref{thm:Halin:HalfGrid:NotLocFin}, and so $HG \prec^\eps_K G$ as desired.

    \ref{itm:Halin:HG:UF:Copy}: Since $G$ is locally finite, every ray $R_i$ is $K$-fat for every $K \in \N$. Moreover, since all rays $R_i$ are $\eps$-rays, no two of them can be separated by removing finitely many vertices. As $G$ is locally finite, they can neither be separated by removing finitely many balls of finite radius. Thus, the $R_i$ satisfy the premise of \cref{thm:Halin:HalfGrid:UltraFat}, and so $HG \prec_{UF}^\eps G$ as desired.
\end{proof}

The remainder of this section is devoted to the proofs of \cref{thm:Halin:HalfGrid:NotLocFin,thm:Halin:HalfGrid:UltraFat}.
The \defn{hexagonal half-grid} is obtained from the half-grid by deleting every other rung, as shown in \cref{fig:HexGrid}.
We call the rays $S_i$ of the (hexagonal) half-grid its \defn{vertical rays} and the edges~$e_{i}$ its \defn{horizontal edges} (see \cref{fig:HexGrid}).

A \defn{subdivision} of a graph~$H$ is a graph which arises from~$H$ by replacing every edge in~$H$ by a new path between its endvertices such that no new path has an inner vertex in $V(H)$ or on any other new path. The original vertices of $H$ are the \defn{branch vertices} of the subdivision and the new paths are its \defn{branch paths}.
If $H$ is a subdivision of the (hexagonal) half-grid, we call its branch paths corresponding to horizontal edges \defn{horizontal paths}.

\begin{figure}[ht]
        \begin{tikzpicture}
    \tikzset{edge/.style = {->,> = latex'}}
    
    \foreach \x in {-4,-2,0} {
    \draw[edge] (\x,0) to (\x,4.4);
    
     \draw[edge] (\x +1,0) to (\x +1 ,4.4);
     
     \foreach \y in {0.5,1.5,2.5,3.5} {
      \draw[fill,black] (\x,\y) circle (.05);
       \draw[fill,black] (\x+1,\y) circle (.05);
         \draw (\x,\y) to (\x+1,\y);
     }
    
        }

     \foreach \x in {-3,  -1 } {
         \foreach \y in {1,2,3,4}{

         \draw[fill,black] (\x,\y) circle (.05);
         
         \draw[fill,black] (\x +1 ,\y) circle (.05);
         
         \draw (\x,\y) to (\x+1,\y);
        }
         }
         

\draw[edge] (2,0) to (2,4.4);
   \foreach \y in {1,2,3,4}{

         \draw[fill,black] (1,\y) circle (.05);
          \draw[fill,black] (2,\y) circle (.05);
           \draw (1,\y) to (2,\y);
            \draw[fill,black] (2,\y-0.5) circle (.05);
             \draw (2,\y-0.5) to (2.5,\y-0.5);
         }

    \node at (3,2) {$\dots$};

    \foreach \x in {-4,...,2}{
     \draw[fill,black] (\x,0) circle (.05);
     }
     
\foreach \x in {0,...,6}{
    \node at (\x-3.8,-0.3) {\footnotesize{$S_{\x}$}};
     }
    
    \node at (-3.5, 0.3 ) {\footnotesize{$e_0$}};
     \node at (-3.5, 1.3 ) {\footnotesize{$e_3$}};
      \node at (-3.5, 2.3 ) {\footnotesize{$e_8$}};
       \node at (-3.5, 3.3 ) {\footnotesize{$e_{15}$}};
       
    
    \node at (-2.5, 0.8 ) {\footnotesize{$e_2$}};
     \node at (-2.5, 1.8 ) {\footnotesize{$e_7$}};
      \node at (-2.5, 2.8 ) {\footnotesize{$e_{14}$}};
      
    
    \node at (-1.5, 0.3 ) {\footnotesize{$e_1$}};
     \node at (-1.5, 1.3 ) {\footnotesize{$e_6$}};
      \node at (-1.5, 2.3 ) {\footnotesize{$e_{13}$}};
    
    \node at (-.5, 0.8 ) {\footnotesize{$e_5$}};
     \node at (-.5, 1.8 ) {\footnotesize{$e_{12}$}};

    
    \node at (.5, 0.3 ) {\footnotesize{$e_4$}};
     \node at (.5, 1.3 ) {\footnotesize{$e_{11}$}};
     
\node at (1.5, 0.8 ) {\footnotesize{$e_{10}$}};
     \node at (2.5, .3 ) {\footnotesize{$e_{9}$}};


\foreach \y in {1,2,3,4}{

         \draw[fill,black] (-4,\y) circle (.05);
         }
\end{tikzpicture}
    \vspace{-3em}
\caption{The hexagonal half-grid with vertical rays $S_i$ and horizontal edges~$e_{i}$.}
\label{fig:HexGrid}
\end{figure}

For the proofs of \cref{thm:Halin:HalfGrid:NotLocFin,thm:Halin:HalfGrid:UltraFat}, it will be more convenient to first construct a subdivision of the hexagonal half-grid and then contract it into a model of the half-grid.
Since \cref{thm:Halin:HalfGrid:NotLocFin,thm:Halin:HalfGrid:UltraFat} yield fat models of the half-grid, we want to find a subdivision of the hexagonal half-grid whose vertical rays~$S_i$ are fat and pairwise far apart and whose horizontal paths are far apart from all other horizontal paths and from all vertical rays except those two rays that contain its endvertices. 

For this, consider the following strengthening of \ref{HalinsGridTheorem} by Kurkofka, Melcher and Pitz.

\begin{thm}[\rm{\cite[Theorem~1]{KMP2022}}] \label{thm:KMP:HalinGrid}
    For every infinite collection $\cR$ of disjoint equivalent rays in a graph $G$ there is a subdivision of the hexagonal half-grid in~$G$ such that all its vertical rays belong to~$\cR$.
\end{thm}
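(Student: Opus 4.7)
The plan is to construct the subdivision inductively, adding one vertical ray and finitely many horizontal branch paths at each stage. Enumerate $\cR = \{R_0, R_1, \dots\}$. At stage~$n$ we will have chosen an injection $\sigma \colon \{0,\dots,n\} \to \N$ and designated (a tail of) $R_{\sigma(i)}$ as the $i$-th vertical ray~$S_i$, together with a finite family of pairwise disjoint horizontal branch paths that realises the portion of the hexagonal half-grid induced on $S_0,\dots,S_n$ and on the horizontal edges processed so far.

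The principal tool is the elementary fact that tails of equivalent rays lie in the same component of $G - U$ for every finite $U \subseteq V(G)$; iterating it yields, for any two equivalent rays, arbitrarily many pairwise disjoint paths between them that avoid any prescribed finite set of vertices or finite family of finite paths. At stage $n+1$ one selects a fresh ray $R_m \in \cR$, designates a suitable tail of it as $S_{n+1}$, and then adds the next batch of horizontal branch paths --- those prescribed by the canonical enumeration $e_0, e_1, \dots$ of edges of the hexagonal half-grid --- using the tool to produce each new path inside $G - U$, where $U$ collects the internal vertices of all previously built horizontal paths together with the initial segments of the $S_j$ up to their last used vertex.

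The main obstacle is that such a new horizontal path between $S_i$ and $S_{i+1}$, while avoiding the finite set~$U$, can still run through vertices of some other vertical ray~$S_j$ (with $j \neq i, i+1$); since $S_j$ is infinite it cannot be added to~$U$. Two features together overcome this. First, the hexagonal structure is sparse: at any given height only one pair of consecutive vertical rays carries a rung, so rungs belonging to different pairs can be routed through spatially disjoint regions. Second, the infinitude of $\cR$ within a single end provides a large reservoir of unused rays, so that at each stage one has the freedom to re-select $R_m$ (and refine the tails of the already chosen $R_{\sigma(j)}$) so as to keep the new horizontal paths clear of the previously fixed vertical rays. Formalising this requires a careful bookkeeping of the order in which the $R_i$ and the edges of the hexagonal half-grid are processed; this is the delicate combinatorial heart of the argument in~\cite{KMP2022}, whose output we will rely on in the remainder of this section.
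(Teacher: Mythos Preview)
The paper does not prove this statement: it is quoted verbatim from~\cite{KMP2022} and used as a black box in the proof of \cref{lem:Halin:HG:Technical}. Your proposal is consistent with this---you give an informal sketch of the construction and then explicitly defer the ``delicate combinatorial heart'' to~\cite{KMP2022}. So in terms of what is actually \emph{proved}, you and the paper agree: nothing is proved here, the result is imported.

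A brief remark on the sketch itself. You correctly identify the main obstacle (a horizontal branch path between $S_i$ and $S_{i+1}$ may hit some other $S_j$, which is infinite and so cannot be placed in the finite avoidance set~$U$), but your proposed resolution is not quite right. Re-selecting the new ray~$R_m$ and refining tails of already chosen $S_j$ does not help: the problematic intersections can occur arbitrarily far along $S_j$, and the already-built horizontal paths pin down which tails of the $S_j$ must be kept. The actual argument in~\cite{KMP2022} handles this differently, by allowing the horizontal paths to pass through other rays in~$\cR$ and then absorbing the resulting detours into the structure (exploiting that the hexagonal half-grid has maximum degree~$3$). Since you ultimately defer to the source this does not affect correctness, but the sketch as written would mislead a reader about where the difficulty is resolved.
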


\cref{thm:KMP:HalinGrid} comes somewhat close to what we want: If we apply \cref{thm:KMP:HalinGrid} to the collection $\cR$ of $\eps$-rays provided by the premise of \cref{thm:Halin:HalfGrid:NotLocFin} or \ref{thm:Halin:HalfGrid:UltraFat}, then we obtain a subdivision~$H$ of the hexagonal half-grid in~$G$ whose vertical rays belong to $\cR$, and which are thus fat and pairwise far apart.

However, we also need that the horizontal paths~$P_i$ of~$H$ are pairwise far apart and that they are far apart from all vertical rays~$S_i$ (except those two containing their endvertices). While the former property is easily achievable by restricting to a subset of the~$P_i$ at least for locally finite graphs, the latter property cannot be achieved by post-processing the given subdivision~$H$. Indeed, it might even happen that for all $i < j \in \N$ all $S_i$--$S_{i+1}$ paths $P_k$ in~$H$ have distance~$1$ from $S_j$, so we cannot obtain a hexagonal half-grid with the desired distance constraints by removing some of the paths and rays from~$H$. 

To solve this problem, we apply \cref{thm:KMP:HalinGrid} to a suitable auxiliary graph $G'$. Let us briefly sketch the idea for locally finite graphs. In that case, we may take as $G'$ the graph obtained from~$G$ by `contracting' large-radius balls around the rays $R \in \cR$ onto the rays~$R$. We then apply \cref{thm:KMP:HalinGrid} to~$\cR$ in~$G'$, which yields a subdivision~$H'$ of the hexagonal half-grid in~$G'$. By the definition of~$G'$, each horizontal path of~$H'$ `lifts' to a path in~$G$ that is far apart from all vertical rays of~$H'$ except for the two which it links. To obtain the desired subdivision~$H$ of the hexagonal half-grid in~$G$, we then only need to choose the horizontal paths for~$H$ among the `lifts' of the horizontal paths of~$H'$ so that they are pairwise far apart, which we can easily do recursively in locally finite graphs.
To obtain \cref{thm:Halin:HalfGrid:NotLocFin,thm:Halin:HalfGrid:UltraFat} also for graphs that are not locally finite, we still follow this approach, but we need to define $G'$ more carefully.

We will find the subdivisions of the hexagonal half-grid for \cref{thm:Halin:HalfGrid:NotLocFin,thm:Halin:HalfGrid:UltraFat} simultaneously, by showing the following lemma (\cref{lem:Halin:HG:Technical}). For the proofs of \cref{thm:Halin:HalfGrid:NotLocFin,thm:Halin:HalfGrid:UltraFat}, it then suffices to apply \cref{lem:Halin:HG:Technical} with $K_i := K$ for all $i \in \N$ or with $K_i := i$ for all $i \in \N$, and then contract the given subdivision of the hexagonal half-grid appropriately into a half-grid.

\begin{lem} \label{lem:Halin:HG:Technical}
    Let $K_0 \leq K_1 \leq \ldots \in \N$, and let $R_0, R_1, \dots$ be rays in a graph $G$ that are $(2K_i-1)$-fat for all $i\in\N$ such that $d_G(R_i, R_j) \geq K_i + K_j-1$ for all $i \neq j \in \N$ and such that no two rays $R_i, R_j$ can be separated in $G$ by deleting finitely many balls of radius $4K_k-4$ for any $k \in \N$. Then there exists a subdivision~$H$ of the hexagonal half-grid in~$G$ such that
    \begin{enumerate}[label=\rm{(\roman*)}]
        \item \label{itm:TechnicalHalinLemma:1} the set $\cS = \{S_0, S_1, \dots\}$ of vertical rays in $H$ is a subset of $\{R_0, R_1, \dots\}$,
        \item \label{itm:TechnicalHalinLemma:2} every horizontal path $P$ in $H$ with endvertices in $S_i, S_{i+1}$ has distance at least $K_k$ from $S_k$ in $G$ for all rays $S_k \in \cS\sm \{S_i, S_{i+1}\}$,
        \item \label{itm:TechnicalHalinLemma:3} for every horizontal path $P$ in $H$ with an endvertex in $S_i$, the subgraph\footnote{Recall that $G[X,r] := G[B_G(X,r)]$ for $X \subseteq G$ and $r \in \N$ (see \cref{sec:Prelim}).} $P \cap G[S_i, K_i-1]$ is a path of length $K_i-1$, and
        \item \label{itm:TechnicalHalinLemma:4} every two distinct horizontal paths in $H$ with endvertices in $S_i, S_{i+1}$ and $S_j, S_{j+1}$, respectively, are at least $K_n$ apart in $G$ where $n := \min\{K_{i},K_{j}\}$.
    \end{enumerate}
\end{lem}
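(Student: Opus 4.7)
The plan follows the outline given in the discussion preceding the lemma. Set $B_i := B_G(R_i, K_i - 1)$; the distance hypothesis $d_G(R_i, R_j) \geq K_i + K_j - 1$ forces these balls to be pairwise disjoint. Construct an auxiliary graph $G'$ from $G$ by \emph{contracting} each $B_i$ onto its ray $R_i$, concretely by identifying each vertex $v \in B_i \setminus V(R_i)$ with a closest vertex of $R_i$ using some fixed tie-breaking rule. In $G'$ each $R_i$ remains a ray, distinct $R_i$ are disjoint (since the $B_i$ were), and a path in $G'$ whose interior avoids $\bigcup_k V(R_k)$ lifts to a path in $G$ whose interior lies entirely outside $\bigcup_k B_k$ and hence has distance at least $K_k$ from $R_k$ for every $k$.

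Next I verify that all $R_i$ lie in a single end of $G'$, so that \cref{thm:KMP:HalinGrid} applies in $G'$. Any finite vertex separator of $R_i$ from $R_j$ in $G'$ pulls back, via the contraction, to a finite union of subsets of the balls $B_k$ in $G$; each such subset has diameter at most $2(K_k - 1) \leq 4K_k - 4$ and is therefore contained in a single ball of radius $4K_k - 4$ in $G$. This would be a separation of the exact type excluded by the hypothesis, so no finite separator exists and all $R_i$ are equivalent in $G'$. Applying \cref{thm:KMP:HalinGrid} to $\{R_0, R_1, \ldots\}$ in $G'$ yields a subdivision $H'$ of the hexagonal half-grid in $G'$ whose vertical rays form an infinite subcollection $\cS \subseteq \{R_0, R_1, \ldots\}$, settling \ref{itm:TechnicalHalinLemma:1}.

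Now lift each horizontal path $P'$ of $H'$, say between consecutive $R_i, R_{i+1} \in \cS$, to a path $P$ in $G$: the last $G'$-edge of $P'$ incident to $R_i$ comes from a $G$-edge $xy$ with $x \in B_i$ and $y$ outside $\bigcup_k B_k$; since $y$ is exterior, one automatically has $d_G(x, R_i) = K_i - 1$. Prepend to $xy$ a shortest path in $B_i$ from a vertex of $R_i$ to $x$, of length exactly $K_i - 1$, and analogously at the other end. The resulting path $P$ has its interior (away from these end-segments) outside every $B_k$, so $d_G(\text{int}(P), R_k) \geq K_k$ for all $k$, giving \ref{itm:TechnicalHalinLemma:2}. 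Moreover, $P \cap G[R_i, K_i - 1]$ is precisely the initial segment of length $K_i - 1$: the opposite end-segment avoids $B_i$ because $d_G(B_{i+1}, R_i) \geq (K_i + K_{i+1} - 1) - (K_{i+1} - 1) = K_i$, and the interior is exterior. This yields \ref{itm:TechnicalHalinLemma:3}.

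Finally, to secure \ref{itm:TechnicalHalinLemma:4}, pass to an infinite sub-collection of horizontal paths by a recursive choice: enumerate the horizontal edges of the hexagonal half-grid and at each stage discard from the remaining candidates those within the required distance of some previously chosen path. For each pair $(R_i, R_{i+1})$ in $\cS$ there are infinitely many candidate horizontal paths by \cref{thm:KMP:HalinGrid}, and the $(2K_i-1)$-fatness of each $R_i$ lets us relocate attachment points along far-separated chunks of the ray; combined with the fact that distinct lifts are internally vertex-disjoint with exteriors outside $\bigcup_k B_k$, this leaves an infinite supply of candidates after removing only finitely many at each stage. The main obstacle I anticipate is the non-locally-finite case: the contraction defining $G'$ may identify infinitely many vertices with a single vertex of $R_i$, so the pull-back argument for separators must carefully account for the possibility that a single vertex of $G'$ corresponds to an unbounded subset of $G$. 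This is precisely why the hypothesis is formulated in terms of balls of radius $4K_k - 4$ rather than finite vertex sets, and making this pull-back argument airtight is the technical heart of the proof.
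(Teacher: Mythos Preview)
Your outline shares the paper's high-level plan, but the auxiliary graph you build is too coarse and the gap shows in \ref{itm:TechnicalHalinLemma:4}. First a minor point: horizontal paths of $H'$ in your $G'$ need only avoid the vertical rays $\cS$ of $H'$; they may pass through rays $R_k \notin \cS$, so lifted paths will traverse such $B_k$ and your claim ``interior outside every $B_k$'' is false (though \ref{itm:TechnicalHalinLemma:2}--\ref{itm:TechnicalHalinLemma:3} survive, because the hypothesis $d_G(R_k,R_m)\geq K_k+K_m-1$ still keeps such detours at distance $\geq K_m$ from every $S_m$). Your lift description also ignores the case where a $G'$-edge runs directly between two rays, and in particular the assertion that ``distinct lifts are internally vertex-disjoint'' is false: two horizontal paths of $H'$ hitting distinct vertices of the same $R_k\notin\cS$ can, after lifting, overlap inside $B_k$.

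The real gap is the thinning step. In a non-locally-finite graph, a single previously chosen $E_i$ can lie at distance $1$ from infinitely many of your candidate lifts $W_\ell$ simultaneously---nothing forces the $W_\ell$ to spread out in $G$ even though their images in $G'$ are disjoint---so ``remove only finitely many candidates at each stage'' can simply fail. The paper fixes this by inverting the construction: \emph{before} defining any auxiliary graph it recursively picks $R_{k_i}$--$R_{\ell_i}$ paths $P_i$ in $G$ with $d_G(P_i,P_j)\geq 4K_j-3$ together with a monotonicity condition on where successive $P_i$ approach each $R_k$, and only then builds $G'$ on the rays and these pre-spaced $P_i$. The hypothesis on balls of radius $4K_k-4$ is spent precisely here, in finding $P_j$ that avoids $\bigcup_{i<j} B_G(P_i,4K_j-4)$, not in any separator pull-back. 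Your instinct that the non-locally-finite case is the crux is right, but the difficulty lies in \ref{itm:TechnicalHalinLemma:4}, not in verifying equivalence of the rays in $G'$.
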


\begin{proof}
    We first construct an auxiliary graph $G'$ to which we will apply \cref{thm:KMP:HalinGrid}. 
    For this, we recursively choose $R_{k_i}$--$R_{\ell_i}$ paths $P_i$ in~$G$, where $((k_i, \ell_i))_{i \in \N}$ is some sequence in~$\N^2$ containing every $(n,m) \in \N^2$ with $n \neq m$ infinitely often,
    such that for all $i < j \in \N$
    \begin{enumerate}[label=\rm{(\arabic*)}]
        \item \label{itm:HexGrid:Proof:NEW:Pi:1} $d_G(P_i, P_j) \geq 4K_j-3$,
        \item \label{itm:HexGrid:Proof:NEW:Pi:3} 
        if $d_G(P_i, r), d_G(P_j, s) < K_k$ for some $k \in \N$ and $r, s \in V(R_k)$, then $s$ appears on $R_k$ after $r$.
    \end{enumerate}
    Let $j \in \N$ and assume that we have already chosen $P_i$ for all $i < j$. 
    Since $d_G(R_k, R_\ell) \geq K_k + K_\ell-1$ for all $k\neq \ell \in \N$, there exists some $M \in \N$ such that no ray~$R_k$ with $k > M$ has distance less than~$K_k$ to any~$P_i$ with $i < j$.
    As the rays $R_k$ are $(2K_k-1)$-fat, there is for every $k \leq M$ some vertex~$r_k$ such that $r_kR_k$ has distance at least $K_k$ from all $P_i$ with $i<j$.
    Since deleting finitely many balls of radius $4K_n-4$ does not separate $R_{k_j}$ and $R_{\ell_j}$ in $G$ for any $n \in \N$ by assumption, there is an $R_{k_j}$--$R_{\ell_j}$ path~$P_j$ in $G$ which avoids $\bigcup_{i<j}B_G(P_i, 4K_j-4)$ and $\bigcup_{k \leq M} B_G(R_kr_k, K_k-1)$. Clearly, $P_j$ satisfies \ref{itm:HexGrid:Proof:NEW:Pi:1} and \ref{itm:HexGrid:Proof:NEW:Pi:3}.

    We now define an auxiliary graph $G'$. Its vertex set is $V(G') := \bigcup_{i \in \N} (V(R_i) \cup V(P'_i))$ where $P'_i := P_i-\bigcup_{k \in \N} B_G(R_k, K_k-1)$ for all $i \in \N$. We put an edge $uv$ between $u, v \in V(G')$ in the following three cases: 1) if $uv \in E(G)$, 2) if $u \in V(R_k)$ and $v \in V(P'_i)$ for some $k, i \in \N$ and $d_G(u,v) = K_k$, and 3) if $u \in V(R_k)$ and $v \in V(R_\ell)$ for some $k \neq \ell \in \N$ and some $B_G(u, K_k-1)$--$B_G(v, K_\ell-1)$ edge is contained in $P_i$ for some $i \in \N$. 

    By definition, $R_i \subseteq G'$ for all $i \in \N$. We claim that the $R_i$ are still equivalent in $G'$. 
    Indeed, let $R_k, R_\ell$ be given, and let $P_i, P_j$, with $i < j$, be $R_k$--$R_{\ell}$ paths. Let $I := \{n \in \N : d_G(R_n, P_i) < K_n\}$. 
    Since $P_i$ is connected, it follows by the definition of~$G'$ that $G'_i := G'[V(P'_i) \cup \bigcup_{n \in I} V(R_n)]$ is connected. Let~$Q_i$ be an $R_{k}$--$R_{\ell}$ path in~$G'_i$, and let~$Q_j$ be defined analogously for~$P_j$. Then $Q_i, Q_j$ are disjoint. Indeed, $P'_i$ and~$P'_j$ are disjoint by \ref{itm:HexGrid:Proof:NEW:Pi:1}, and any subpath of $Q_i$ in some~$R_n$ is disjoint from any subpath of $Q_j$ in~$R_n$ since the endvertices of edges of $G'_j$ incident with $R_n$ appear on $R_n$ after the endvertices of edges of $G'_i$ incident with $R_n$ by \ref{itm:HexGrid:Proof:NEW:Pi:3}. Since infinitely many $P_i$ are $R_k$--$R_\ell$ paths, $R_k, R_\ell$ are equivalent in~$G'$.

    We now apply \cref{thm:KMP:HalinGrid} in $G'$ to $\cR := \{R_0, R_1, \dots\}$, which yields a subdivision~$H'$ of the hexagonal half-grid in~$G'$. By \cref{thm:KMP:HalinGrid}, the vertical rays of~$H'$ are a subset $\cR'$ of $\cR$.
    We first choose rays $R_{i_0}, R_{i_1}, \ldots \in \cR'$ such that $i_0 < i_1 < \dots$ and such that, for all $j < k \in \N$, the ray $R_{i_k}$ appears in $H'$ as a vertical ray after $R_{i_j}$. Set $S_j := R_{i_j}$ for all $j \in \N$, and let $\cS := \{S_0, S_1, \dots\}$. Further, set $K'_j := K_{i_j}$ for all $j \in \N$, and note that $K'_j \geq K_j$. It is now straightforward to find a subdivision $H'' \subseteq H' (\subseteq G')$ of the hexagonal half-grid whose $i$-th vertical ray is $S_i$ for every $i \in \N$.
    
    We now turn~$H''$ into a subdivision~$H$ of the hexagonal half-grid in $G$. Its vertical rays will be precisely the rays in $\cS$ (in that order), and hence $H$ will satisfy~\ref{itm:TechnicalHalinLemma:1}.
    Before we choose suitable horizontal paths for~$H$, we first turn every horizontal path in~$H''$ into a path in~$G$. For this, let $W'_i = w'_0 \dots w'_n$ be a horizontal path of~$H''$ with endvertices in $S_{k_i}, S_{k_i+1} \in \cS$. By the definition of~$G'$, every edge~$f$ of~$W'_i$ which is not an edge of~$G$ is either of the form~$uv$ for some vertex~$v$ in some $R_j \in \cR$ and a vertex $u \in B_G(v, K_j)\sm B_G(v, K_j-1)$ or of the form $uv$ for vertices $u,v$ on distinct rays $R_j, R_k \in \cR$ such that $B_G(u,K_j-1)$ and $B_G(v,K_k-1)$ are joined by an edge~$e$ in some~$P_\ell$. In the former case we replace~$uv$ by a $u$--$v$ path in~$G$ of length~$K_j$, and in the latter case we replace~$uv$ by a $u$--$v$ path in~$G$ of length~$K_j + K_k-1$ containing $e$. We then let~$W_i$ be some $w'_0$--$w'_n$ path in the resulting $w'_0$--$w'_n$ walk in~$G$.

    Since $W'_i$ is a horizontal path of $H''$, it avoids all rays $S_m \in \cS$ except for $S_{k_i}, S_{k_i+1}$. By the definition of~$G'$, it follows that $V(W'_i)$ has distance at least~$K'_m$ in~$G$ from all $S_m \in \cS$ with $S_m \neq S_{k_i}, S_{k_i+1}$. 
    As $d_G(R_j, R_k) \geq K_j+K_k-1$ for all $R_j \neq R_k \in \cR$, and every vertex of $W_i-V(W'_i)$ has distance at most $K'_{k_i}-1$ or $K'_{k_i+1}-1$ from $S_{k_i}, S_{k_i+1}$, respectively, or distance at most $K_j-1$ from some ray $R_j \in \cR\sm \cS$, the path~$W_i$ in~$G$ has distance at least~$K'_j$ from $S_j$ for every $S_j \in \cS \sm \{S_{k_i}, S_{k_i+1}\}$.

    We now inductively define $H$ by choosing a branch path $E_i$ for every horizontal edge $e_i$ of the hexagonal half-grid, in the order $e_0, e_1, e_2, \ldots$ indicated in \cref{fig:HexGrid}. 
    In fact, we will choose~$E_i$ among the paths~$W_j$ joining the respective vertical rays of~$H$. By the construction above and because $K'_i \geq K_i$ for all $i \in \N$, this immediately implies that $H$ will satisfy \ref{itm:TechnicalHalinLemma:2} and \ref{itm:TechnicalHalinLemma:3}.

    So let $j \geq 0$ be given, and assume that for every $i < j$, we have already chosen some $W_{\ell_i}$ to be the branch path~$E_i$ corresponding to the edge~$e_i$ of the hexagonal half-grid. Let $k_{j} \in \N$ such that $S_{k_j}, S_{k_j+1}$ correspond to the vertical rays of the hexagonal half-grid that are joined by~$e_j$. 
    Since $d_G(P_i, P_k) \geq 4K_{k_j}-3$ for all $i,k \geq k_j$ by \ref{itm:HexGrid:Proof:NEW:Pi:1}, every vertex of some $E_i$ has distance less than $2K_{k_j}-1$ to at most one path $P_k$ with $k \geq k_j$. As $\bigcup_{i < j} V(E_i)$ is finite, it follows that at most finitely many~$P_k$ have distance less than $2K_{k_j}-1$ from some $E_i$ with $i<j$.
    Since the paths $W'_\ell$ are disjoint, all but finitely many of them avoid these finitely many $P_k$. Hence, by the definition of the $W_\ell$, all but finitely many of the paths $W_\ell$ that link $S_{k_j}$ and $S_{k_j+1}$ in~$H''$ satisfy the following condition:
    \begin{enumerate}[label=\rm{(\alph*)}]
        \item \label{itm:HexGrid:Proof:NEW:a} For every $k \in \N$, if $P_k \cap W_\ell \neq \emptyset$, then $d_G(P_k, E_i) \geq 2K_{k_j}-1$ for all $i < j$.
    \end{enumerate}
    
    As $d_G(R_i, R_k) \geq K_i+K_k-1$ for all $i \neq k \in \N$, and because the $E_i$ are finite, there is some $L \in \N$ such that $d_G(R_k, E_i) \geq K_{k_j}$ for all $k > L$. Since the $R_k$ are $(2K_{k_j}-1)$-fat, at most finitely many vertices of~$R_k$ have distance less than $K_{k_j}$ from some~$E_i$. Hence, and again because the $R_k$ are $(2K_k-1)$-fat, there exists for every $k \in \N$ some $r_k \in V(R_k)$ such that, for every $i < j$, the tail $r_kR_k$ has distance at least $K_{k_j}$ from $E_i$, and distance at least $2K_k-1$ from all vertices of $R_k$ that have distance less than $K_k$ from $E_i$. Then $\bigcup_{k<L} R_kr_k$ is finite, and hence, as the paths $W'_\ell$ are disjoint, all but finitely many of them avoid $\bigcup_{k<L} R_kr_k$. Thus, by the definition of the $W_\ell$, all but finitely many of the paths $W_\ell$ that link $S_{k_j}$ and $S_{k_j+1}$ in $H''$ satisfy the following conditions:
    \begin{enumerate}[label=\rm{(\alph*)}]
        \setcounter{enumi}{1}
        \item \label{itm:HexGrid:Proof:NEW:b} For every $k \in \N$, if $R_k \cap W_\ell \neq \emptyset$, then $d_G(R_k \cap W_\ell, E_i) \geq K_{k_j}$ for all $i < j$.
        \item \label{itm:HexGrid:Proof:NEW:c} 
        For every $i,k \in \N$ with $i < j$, and every $r, s \in V(R_k)$, if $B_G(r, K_k-1) \cap V(W_\ell) \neq \emptyset$ and $B_G(s, K_k-1) \cap V(E_i) \neq \emptyset$, then $d_G(r,s) \geq 2K_k-1$.
    \end{enumerate}

    Since, for all three conditions \ref{itm:HexGrid:Proof:NEW:a} to \ref{itm:HexGrid:Proof:NEW:c} separately, all but finitely many of the paths~$W_\ell$ that link~$S_{k_j}$ and~$S_{k_j+1}$ in~$H''$ satisfy the respective condition, there is some such~$W_\ell$ that satisfies \ref{itm:HexGrid:Proof:NEW:a} to \ref{itm:HexGrid:Proof:NEW:c} simultaneously, and whose endvertices in $S_{k_j}, S_{k_j+1}$ appear on $S_{k_j}, S_{k_j+1}$, respectively, after all endvertices of branch paths~$E_i$ with $i < j$. We set $\ell_j := \ell$ and $E_j := W_\ell$.

    In this way, we construct a subdivision $H$ of the hexagonal half-grid in $G$ that satisfies \ref{itm:TechnicalHalinLemma:1} to~\ref{itm:TechnicalHalinLemma:3}. We claim that~$H$ also satisfies \ref{itm:TechnicalHalinLemma:4}. Indeed, let $i < j$ be given, and let $v_j \in V(E_j)$. If $v_j \in B_G(P_k, K_{k_j}-1)$ for some $k \in \N$, then $d_G(v_j, E_i) \geq K_{k_j}$ follows by~\ref{itm:HexGrid:Proof:NEW:a}. Hence, as $E_j = W_{\ell_j}$ and by the definition of~$W_{\ell_j}$, every other vertex $v_j \in V(E_j)$ is contained in $B_G(R_k, K_k-K_{k_j})$ for some $k \in \N$. 
    If $v_j \in V(R_k)$, then $d_G(v_j, E_i) \geq K_{k_j}$ follows by~\ref{itm:HexGrid:Proof:NEW:b}. Hence, we may assume $v_j \in B_G(R_k, K_k-K_{k_j})\sm V(R_k)$, which in particular implies that $K_{k_j} < K_k$.
    Let also $v_i \in V(E_i)$. If $v_i \notin B_G(R_k, K_k-1)$, then $d_G(v_i, v_j) \geq K_{k_j}$ follows since $v_j \in B_G(R_k, K_k-K_{k_j})$ and $K_k > K_{k_j}$. Hence, we may assume that $v_i \in B_G(R_k, K_k-1)$. Let $r,s \in V(R_k)$ such that $v_i \in B_G(r, K_k-1)$ and $v_j \in B_G(s, K_k-K_{k_j})$. Then $d_G(r,s) \geq 2K_{k}-1$ by~\ref{itm:HexGrid:Proof:NEW:c}, and thus $d_G(v_i, v_j) \geq d_G(r,s)-d_G(v_i,r)-d_G(v_j,s) \geq (2K_k-1) - (K_k-1) - (K_k-K_{k_j}) = K_{k_j}$.
    This completes the proof that $H$ satisfies \ref{itm:TechnicalHalinLemma:4}, and hence concludes the proof.
\end{proof}

We can now prove \cref{thm:Halin:HalfGrid:NotLocFin,thm:Halin:HalfGrid:UltraFat}.

\begin{proof}[Proof of \cref{thm:Halin:HalfGrid:NotLocFin}]
    Without loss of generality assume that the collection $\cR$ is countably infinite and enumerated as $\cR := \{R_0, R_1, \dots\}$. By the assumptions on $\cR$, the rays $R_i$ satisfy the premise of \cref{lem:Halin:HG:Technical} with $K_i := K$ for all $i \in \N$. Hence, applying \cref{lem:Halin:HG:Technical} yields a subdivision $H$ of the hexagonal half-grid in $G$ that satisfies \ref{itm:TechnicalHalinLemma:1} to \ref{itm:TechnicalHalinLemma:4}.
    It is now straight forward to check that~$H$ can be contracted into a $K$-fat model $(\cV, \cE)$ of the half-grid (where the branch paths in $\cE$ corresponding to horizontal edges of the half-grid are a subset of the horizontal paths of $H$, and the branch sets and paths corresponding to vertices and edges of the $i$-th vertical ray $\{i\}\times \N$ of the half-grid are suitable subpaths of $S_i$). Indeed, \ref{itm:TechnicalHalinLemma:1} combined with $d_G(R_i, R_j) \geq 2K-1 \geq K$ for all $R_i \neq R_j \in \cR$ ensures that branch sets and paths of vertices and edges of the half-grid on distinct vertical rays have distance at least~$K$ in~$G$. Moreover, \ref{itm:TechnicalHalinLemma:4} ensures that every two distinct horizontal branch paths have distance at least~$K$, and \ref{itm:TechnicalHalinLemma:2} yields that horizontal branch paths have distance at least~$K$ from branch sets and paths on vertical rays that are not joined by that branch path. Finally, \ref{itm:TechnicalHalinLemma:1} together with the rays in $\cR$ being $(2K-1)$-fat ensures that we may choose $(\cV, \cE)$ so that branch sets and paths on the same ray $S_i$ have distance at least $K$ from each other, and by \ref{itm:TechnicalHalinLemma:3} we can also choose them so that they have distance at least $K$ from branch paths corresponding to non-incident horizontal edges that have an endvertex on the same vertical ray.
\end{proof}

\begin{proof}[Proof of \cref{thm:Halin:HalfGrid:UltraFat}]
    By restricting the sequence $(R_i)_{i \in \N}$ to those rays $R_i$ whose index is even and renumbering them (i.e.\ `$R_i := R_{2i}$'), we may assume that $d_G(R_i, R_j) \geq \max\{2i,2j\} \geq i+j-1$. Then the $R_i$ satisfy the premise of \cref{lem:Halin:HG:Technical} with $K_i := i$ for all $i \in \N$. Hence, applying \cref{lem:Halin:HG:Technical} yields a subdivision~$H$ in~$G$ of the hexagonal half-grid that satisfies \ref{itm:TechnicalHalinLemma:1} to \ref{itm:TechnicalHalinLemma:4}. 
    Analogously to the previous proof (of \cref{thm:Halin:HalfGrid:NotLocFin}), it is straightforward to check that $H$ can be contracted into a model $(\cV, \cE)$ of the half-grid where the fact that $K_i = i$ instead of $K_i = K$ and that the rays $R_i$ are $K_i$-fat for all $i\in\N$ yields that for every $i \in \N$, the submodel of $(\cV, \cE)$ corresponding to $\N_{\geq i} \times \N$ is $i$-fat. 

    \begin{figure}[ht]
        \scalebox{0.8}{%
\begin{tikzpicture}
    \tikzset{edge/.style = {->,> = latex'}}

    
    \foreach \x in {0,1,2,3,4,5,6,7,8,9,10,11,12} {
    \draw[lightgray,edge] (\x,0) to (\x,7.75);}

    \foreach \y in {0,1,2,3,4,5,6,7} {
    \draw[lightgray] (0,\y) to (12,\y);}



    \draw[black,line width=1.3] (2,0) to (2,1);
    \foreach \y in {0,1,2} {
    \draw[black,line width=1.3] (6-\y,\y) to (6-\y,\y+1);}
    \foreach \y in {0,1,2,3,4} {
    \draw[black,line width=1.3] (10-\y,\y) to (10-\y,\y+1);}
    \foreach \y in {2,3,4,5,6} {
    \draw[black,line width=1.3] (14-\y,\y) to (14-\y,\y+1);}
    \draw[black,line width=1.3] (12,6) to (12,7);

    \foreach \y in {0,1,2,3,4,5,6,7} {
    \draw[black,edge,line width=1.3] (0+\y,\y) to (12.75,\y);}


    \draw[fill,black,line width=1.3] (2,0) circle (.05);
    \draw[fill,black,line width=1.3] (2,1) circle (.05);
    \foreach \y in {0,1,2} {
    \draw[fill,black,line width=1.3] (6-\y,\y) circle (.05);
    \draw[fill,black,line width=1.3] (6-\y,\y+1) circle (.05);}
    \foreach \y in {0,1,2,3,4} {
    \draw[fill,black,line width=1.3] (10-\y,\y) circle (.05);
    \draw[fill,black,line width=1.3] (10-\y,\y+1) circle (.05);}
    \foreach \y in {2,3,4,5,6} {
    \draw[fill,black,line width=1.3] (14-\y,\y) circle (.05);
    \draw[fill,black,line width=1.3] (14-\y,\y+1) circle (.05);}
    \draw[fill,black,line width=1.3] (12,6) circle (.05);
    \draw[fill,black,line width=1.3] (12,7) circle (.05);
    \foreach \y in {0,1,2,3,4,5,6,7} {
    \draw[fill,black,line width=1.3] (\y,\y) circle (.05);}


    \node at (0.45,-0.3) {\small{$(0,0)$}};
    \node at (1.45,1.3) {\small{$(1,0)$}};
    \node at (2.45,2.3) {\small{$(2,0)$}};
    \node at (3.45,3.3) {\small{$(3,0)$}};
    \node at (4.45,4.3) {\small{$(4,0)$}};
    \node at (6.45,6.3) {\small{$(6,0)$}};
    \node at (7.45,7.3) {\small{$(7,0)$}};

    \node at (2.45,-0.3) {\small{$(0,1)$}};
    \node at (2.45,1.3) {\small{$(1,1)$}};
    \node at (4.45,2.3) {\small{$(2,1)$}};
    \node at (4.45,3.3) {\small{$(3,1)$}};
    \node at (6.45,4.3) {\small{$(4,1)$}};
    \node at (6.45,5.3) {\small{$(5,1)$}};
    \node at (8.45,6.3) {\small{$(6,1)$}};
    \node at (8.45,7.3) {\small{$(7,1)$}};

    \node at (6.45,-0.3) {\small{$(0,2)$}};
    \node at (10.45,-0.3) {\small{$(0,3)$}};


    \draw[blue,line width=1.3] (12.75,8) [rounded corners=15pt] -- (5.5,8) -- (5.5,-0.6) -- (12.75,-0.6);
    \node at (7,8.25) {{\color{blue}$7$-fat}};

\end{tikzpicture}}
        \vspace{-3em}
        \caption{The model $(\cV', \cE')$ of the hexagonal half-grid in the proof of \cref{thm:Halin:HalfGrid:UltraFat}.}
        \label{fig:UltraFatHG}
    \end{figure}

    We now use $(\cV, \cE)$ to find an ultra-fat model of the half-grid. For this, we first define a model $(\cV', \cE')$ of the hexagonal half-grid as indicated in \cref{fig:UltraFatHG}. For every vertex $(n,m) \in \N^2$, its branch set $V'_{(n,m)}$ is equal to $V_{(j,n)}$ for some suitable $j \in \N$, every horizontal branch path $E'_{(n,m)(n+1,m)}$ is equal to some `vertical' branch path of the form $E_{(i,j)(i,j+1)}$, and every `vertical' branch path $E'_{(n,m)(n,m+1)}$ is contained in $\bigcup_{j\in\N} (G[V_{(j,n)}] \cup E_{(j,n)(j+1,n)})$ (see \cref{fig:UltraFatHG}).
    As the submodel of $(\cV, \cE)$ corresponding to $\N_{\geq K} \times \N$ is $K$-fat for all $K$, the submodel of $(\cV', \cE')$ corresponding to $\N^2\sm [K]^2$ is $K$-fat for all $K \in \N$. It is now easy to check that $(\cV', \cE')$ can be contracted into a model of the half-grid which still has this property, and which is hence ultra-fat.
\end{proof}

\section{Counterexamples regarding coarse versions of Halin's Grid Theorem}\label{sec:CounterEx}

In this section, we present two examples regarding coarse versions of Halin's Grid Theorem. 
The first example shows that \cref{prob:HalinGrid} is false in general for graphs that are not locally finite. 
The second example shows that the number $2K-1$ in \cref{thm:Halin:HalfGrid} is best possible.

\begin{ex}\label{ex:Halin:HalfGrid:NotLocFin}
    \emph{There is a one-ended graph $G$ such that $\omega \cdot R \prec_\infty G$ but $HG \not\prec_2 G$.}
\end{ex}

\begin{proof}
    Let $G$ be the graph obtained from the disjoint union of infinitely many rays $R_0, R_1, R_2, \ldots$ and a~$K_{\aleph_0}$ by joining the $i$-th vertices of the $R_j$ with the $i$-th vertex of $K_{\aleph_0}$ by paths $P_{i,j}$ of length $j$ for all $i,j \in \N$.
    Then for all $K \in \N$ the rays $R_K,R_{K+1},\ldots$ are $2K$-fat and pairwise $2K$ apart, and thus can be contracted into a $2K$-fat model of $\omega \cdot R$. In particular, as $G$ is one-ended, we have $\omega \cdot R \prec_\infty^\eps G$ for the unique end~$\eps$ of~$G$.
    
    We claim that the half-grid is not a $2$-fat minor of~$G$. Indeed, suppose for a contradiction that~$G$ contains a $2$-fat model $(\cV, \cE)$ of the half-grid. As the $K_{\aleph_0}$ has diameter $1$, at most one branch set in~$\cV$ and at most one branch path in~$\cE$ can contain a vertex from~$K_{\aleph_0}$. But since the half-grid still contains a half-grid after removing an arbitrary vertex and an arbitrary edge, and because removing the $K_{\aleph_0}$ from $G$ leaves only combs $C_j = R_j \cup \bigcup_{i \in \N} P_{i,j}$ as components, one of the combs $C_j$ would contain a $K$-fat model of the half-grid. But as combs are trees, they not even contain a $0$-fat model of the half-grid. This yields the desired contradiction.
\end{proof}

\begin{ex}\label{ex:Halin:HalfGrid:sharp}
\emph{For all $K \in \N$, there is a one-ended, locally finite graph~$G$ such that $\omega\cdot R \prec_{2K-2} G$ but $HG \not\prec_K G$.}
\end{ex}

\begin{proof}
    Let $G$ be the graph obtained from the disjoint union of infinitely many rays $T, R^0, R^1, R^2, \ldots$ by joining the vertex $r_i^j$ of~$R^j = r_0^jr_1^j\dots$ and the vertex $t_{i+j}$ of $T = t_0t_1\dots$ by a path $P_{ij}$ of length~$K-1$ for all $i,j \in \N$. Then $T$ and all rays $R^j$ are equivalent, and $G$ is one-ended.
    Note that $G$ is locally finite since $\deg_G(r_i^j) \leq 3$ for all $i, j \in \N$, since all internal vertices of the paths $P_{ij}$ have degree~$2$ in~$G$, and since
    \[
    \deg_G(t_i) \leq |\{(j,k) \in \N^2 \mid j+k = i\}|+2 \leq |\{(j,k) \in \N^2 \mid j,k\leq i\}|+2 = (i+1)^2+2
    \]
    for all $i \in \N$. 
    Since $d_G(R^i, R^j) = 2K-2$ by construction, and since the rays $R_i$ are $(2K-2)$-fat as $G$ is locally finite, the rays $R^i$ can be contracted into a $(2K-2)$-fat model of $\omega \cdot R$. 
    
    We claim that the half-grid is not a $K$-fat minor of~$G$.
    For this, suppose towards a contradiction that there is a $K$-fat model $(\cV, \cE)$ of the half-grid in~$G$. Let~$S'_i$ be the induced subgraph of~$G$ on those vertices that lie in some branch set or branch path in $\cV \cup \cE$ belonging to the vertical ray~$S_i$ of the half-grid. Let further $U_n := \{t_n\} \cup \{r_i^j : i,j \in \N, i+j=n\}$, for $n \in \N$, be the $n$-th `level' of $G$, and note that $G-U_n$ has precisely one infinite component $C_n$, and that $C_{n+1}  \subseteq C_n$ for all $n \in \N$. As the $S'_i$ are connected and $\bigcap_{n \in \N} C_n = \emptyset$, there exists for every $S'_i$ some $n \in \N$ such that $S'_i$ meets all `levels' $U_m$ with $m \geq n$.
    Hence, there is some $n \in \N$ such that at least three distinct $S'_i$ meet every $U_m$ for $m \geq n$.
    Then for every $m \geq n$ the vertex $t_m$ in~$T$ cannot be in any $S'_i$ since it has distance less than $K$ to all the other vertices in~$U_m$.
    But similarly,~$t_m$ cannot lie in a branch path in~$\cE$ representing a horizontal edge of the half-grid since~$t_m$ has distance less than~$K$ from at least three~$S'_i$.
    So $(\cV, \cE)$ avoids the ray $T_{\geq n}$, 
    and hence at most~$n$ branch sets and paths of $(\cV, \cE)$ contain a vertex of $T$. Since the half-grid still contains the half-grid as a minor after deleting finitely many vertices and edges, and because the half-grid is connected, there is some component $C$ of $G-T$ which contains the half-grid as a minor. But the components of $G-T$ are all combs of the form $R^i\cup \bigcup_{j \in \N} P_{ij}$, and hence not even contain a $0$-fat model of the half-grid. This yields the desired contradiction.
\end{proof}

\section{Half-grid minors in one-ended \qt\ graphs}\label{sec:QT}

In this section we apply one of our coarse versions of Halin's Grid Theorem to prove \cref{main:HG:QT:OneEnded}, which we restate here for convenience:

\begin{customthm}{\cref*{main:HG:QT:OneEnded}} \label{thm:QTOneEnd:HalfGrid}
    \emph{Every one-ended, \qt, \lf\ graph~$G$ contains the half-grid as an ultra-fat minor.
    In particular, the half-grid is an asymptotic minor and a diverging minor of~$G$.}
\end{customthm}

A graph $G$ is \defn{\qt} if the automorphism group of~$G$ acts on $V(G)$ with only finitely many orbits, that is, if $V(G)$ can be partitioned into finitely many sets $U_0, \dots, U_n$ such that for all $i \in \{0, \dots, n\}$ and $u, v \in U_i$ there exists an automorphism~$\phi$ of~$G$ such that $\phi(u) = v$.

Given an automorphism $\phi$ of a graph $G$, a set $U\subseteq V(G)$ is \defn{$\phi$-invariant} if $\phi(U) = U$. An induced subgraph~$X$ of~$G$ is \defn{$\phi$-invariant} if $V(X)$ is $\phi$-invariant. An automorphism $\phi$ of a graph~$G$ is \defn{elliptic} if~$G$ contains a finite $\phi$-invariant set of vertices and \defn{non-elliptic} otherwise.
If $R=\ldots r_{-1}r_0 r_1\ldots$ is a $\varphi$-invariant double ray and $\phi$ is non-elliptic, then we say that $\phi$ \defn{acts as a translation} on~$R$, which means that there exists some $k \in \N$ such that $\phi(r_i) = \phi(r_{i+k})$ for all $i \in \Z$.

The main effort of this section goes into showing that every graph $G$ as in \cref{thm:QTOneEnd:HalfGrid} contains infinitely many rays $R_0, R_1, \dots$ such that $d_G(R_i, R_j) \geq \max\{i,j\}$ for all $i \neq j \in \N$. \cref{thm:QTOneEnd:HalfGrid} then follows from \cref{thm:Halin:HalfGrid}~\ref{itm:Halin:HG:UF:Copy}.
For this, we distinguish two cases.
In the first case, in \cref{subsec:NonElliptic} below, we assume that some automorphism of~$G$ is non-elliptic. The second case, in \cref{subsec:Elliptic} below, deals with the remaining case that all automorphisms of~$G$ are elliptic.
The formal proof of \cref{thm:QTOneEnd:HalfGrid}, which collects the results from \cref{subsec:Elliptic,subsec:NonElliptic}, can be found in \cref{subsec:ProofOfThm1}

\subsection{Some automorphism of \texorpdfstring{$G$}{G} is non-elliptic} \label{subsec:NonElliptic}

We now show that every one-ended, connected, \lf\ graph~$G$ with a non-elliptic automorphism contains infinitely many rays that are pairwise far apart.
For this, we need the following result of Halin:

\begin{lem}[\rm{\cite[Theorem~7]{H73}}] \label{lem:NonElliptic:ExistDoubleRay}
    For every non-elliptic automorphism $\phi$ of a connected graph~$G$ there exists a $\phi$-invariant double ray.
\end{lem}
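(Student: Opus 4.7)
The plan is to follow Halin's classical translation-length argument. First, I would note that non-ellipticity of $\phi$ implies every orbit of $\phi$ on $V(G)$ is infinite: a finite orbit would itself be a finite $\phi$-invariant set of vertices. In particular, $\phi$ fixes no vertex, and for every $v$ the displacement $d_G(v,\phi(v))$ is at least $1$.

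Next, I would introduce the translation length $\ell := \inf_{v \in V(G)} d_G(v,\phi(v))$ and argue that it is attained at some vertex $v_0$. In a locally finite graph this is a standard compactness step: starting from any $v$ with $d_G(v,\phi(v)) \le L$, all candidate witnesses of smaller displacement lie in the finite ball $B_G(v, L+1)$, so a minimizer exists. Fix such a $v_0$, let $P$ be a geodesic from $v_0$ to $\phi(v_0)$, and set $R := \bigcup_{n \in \mathbb{Z}} \phi^n(P)$. By construction $R$ is $\phi$-invariant, so it remains to verify that $R$ is actually a double ray rather than merely a $\phi$-invariant walk.

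This verification is the main obstacle. I would show that consecutive segments $\phi^n(P)$ and $\phi^{n+1}(P)$ meet only in the common endpoint $\phi^{n+1}(v_0)$, and that non-consecutive segments are vertex-disjoint. Both claims follow from the minimality of $\ell$: an overlap would let one splice together a strictly shorter $w$--$\phi(w)$ walk at some vertex $w$ along $P$, contradicting $d_G(w,\phi(w)) \ge \ell$. The same minimality shows that each concatenation $\phi^k(P) \cup \phi^{k+1}(P) \cup \dots \cup \phi^{k+m}(P)$ is a geodesic of length $(m+1)\ell$, so that $d_G(v_0,\phi^n(v_0)) = |n|\ell \to \infty$ as $|n| \to \infty$; hence $R$ is unbounded in both directions and is the desired $\phi$-invariant double ray (on which $\phi$ acts as a translation by $\ell$).
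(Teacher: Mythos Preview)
The paper does not prove this lemma; it is quoted verbatim from Halin~[H73, Theorem~7] and used as a black box. Your sketch follows exactly the classical translation-length argument that Halin uses, so in spirit you are reproducing the cited proof rather than offering an alternative.

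Two comments on the sketch itself. First, the appeal to local finiteness to guarantee that $\ell = \inf_v d_G(v,\phi(v))$ is attained is unnecessary, and since the lemma is stated for arbitrary connected graphs it would be a genuine gap if it were needed: the displacement $d_G(v,\phi(v))$ takes positive integer values, so the infimum is automatically a minimum. Second, the step you flag as ``the main obstacle'' really is the crux, and your justification is thinner than the difficulty warrants. For \emph{consecutive} translates $\phi^n(P)$ and $\phi^{n+1}(P)$, the minimality of $\ell$ does give disjointness (outside the shared endpoint) via the splicing you describe: if $p_i = \phi(p_j)$ then $d_G(p_j,\phi(p_j)) = |i-j| \le \ell$ forces $\{i,j\}=\{0,\ell\}$. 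For \emph{non-consecutive} translates, however, an overlap $p_i = \phi^b(p_j)$ with $b \ge 2$ does not so directly produce a vertex $w$ with $d_G(w,\phi(w)) < \ell$; one rather shows that each concatenation $P \cup \phi(P) \cup \cdots \cup \phi^m(P)$ is a geodesic (equivalently $d_G(v_0,\phi^n(v_0)) = n\ell$), and this needs a short inductive argument rather than a one-line splice. The strategy and the conclusion are correct, but this step deserves a sentence or two more of care.
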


Let $G$ and~$H$ be graphs.
A map $f\colon V(H) \to V(G)$ is a \defn{coarse embedding}, if there exist functions $\rho^-\colon [0, \infty) \to [0, \infty)$ and $\rho^+\colon [0, \infty) \to [0, \infty)$ such that $\rho^-(a) \to \infty$ for $a \to\infty$ and
\[
\rho^-(d_H(u, v)) \leq d_G(f(u), f(v)) \leq \rho^+(d_H(u, v))
\]
for all $u, v \in V(H)$. If $H$ is a subgraph of~$G$, then~$H$ is \defn{coarsely embedded} in~$G$ if the inclusion map $\iota\colon V(H) \rightarrow V(G)$ is a coarse embedding.

The next lemma guarantees that the double ray from \cref{lem:NonElliptic:ExistDoubleRay} is coarsely embedded in~$G$.

\begin{lem} \label{lem:NonElliptic:DivergingDoubleRay}
    Let $G$ be a \lf\ graph. Let $\phi$ be a non-elliptic automorphism of~$G$, and let $R$ be a $\phi$-invariant double ray.
    Then $R$ is coarsely embedded in $G$. 
\end{lem}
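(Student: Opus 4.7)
The upper bound $d_G(u,v) \leq d_R(u,v)$ for $u,v \in V(R)$ is immediate since $R$ is a subgraph of $G$, so $\rho^+$ can be taken to be the identity, and the task reduces to constructing a non-decreasing $\rho^-\colon [0,\infty) \to [0,\infty)$ with $\rho^-(a) \to \infty$ and $\rho^-(d_R(u,v)) \leq d_G(u,v)$ for all $u,v \in V(R)$. My first step would be to note that since $\phi$ leaves the double ray $R = \ldots r_{-1} r_0 r_1 \ldots$ invariant, its restriction $\phi|_R$ acts either as a translation $r_i \mapsto r_{i+k}$ or as a reflection $r_i \mapsto r_{-i+c}$; any reflection fixes either a vertex or an edge of $R$, yielding a finite $\phi$-invariant set and contradicting non-ellipticity, and $k = 0$ is ruled out for the same reason. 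After replacing $\phi$ by $\phi\inv$ if necessary, I may therefore assume $\phi(r_i) = r_{i+k}$ for some $k \geq 1$.

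The core of the argument is to show that
\[
f(n) := \min\{\, d_G(r_i, r_j) : i,j \in \Z,\ |i-j| \geq n\,\}
\]
tends to infinity with $n$. Since $f$ is non-decreasing, it suffices to rule out that $f(n) \leq C$ for some fixed $C \in \N$ and all $n$. I would assume for contradiction there are $i_n, j_n \in \Z$ with $|i_n - j_n| \geq n$ and $d_G(r_{i_n}, r_{j_n}) \leq C$; applying a suitable power of $\phi$ (which is an isometry of $G$) then shifts $i_n$ into the finite set $\{0, \ldots, k-1\}$ while preserving both $|i_n - j_n|$ and $d_G(r_{i_n}, r_{j_n})$. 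Passing to a subsequence, I may assume $i_n = m$ is constant, so every $r_{j_n}$ lies in the ball $B_G(r_m, C)$. This ball is finite by local finiteness of $G$, contradicting that the $r_{j_n}$ are pairwise distinct, which holds since $|j_n - m| \geq n - k \to \infty$. This translation-and-pigeonhole step is what I expect to be the only substantive part of the argument.

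Once $f(n) \to \infty$ is established, setting $\rho^-(a) := f(\lceil a \rceil)$ yields a non-decreasing function with $\rho^-(a) \to \infty$ and $\rho^-(d_R(r_i, r_j)) = f(|i-j|) \leq d_G(r_i, r_j)$ for all $i,j \in \Z$, which is precisely the lower bound required by the definition of a coarse embedding. Non-ellipticity is used exactly once, to force the action on $R$ to be a non-trivial translation, and local finiteness is used exactly once, to make the ball $B_G(r_m, C)$ finite.
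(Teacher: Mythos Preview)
Your proof is correct and follows essentially the same approach as the paper's: both reduce to showing that $\min\{d_G(r_i,r_j):d_R(r_i,r_j)\geq n\}\to\infty$, and both exploit that $\langle\phi\rangle$ has only finitely many orbits on~$R$ together with local finiteness of~$G$. The only presentational difference is that you explicitly argue $\phi|_R$ must be a nontrivial translation (ruling out reflections via non-ellipticity) and then shift into a fixed window $\{0,\dots,k-1\}$ by pigeonhole, whereas the paper simply takes finitely many orbit representatives $x_1,\dots,x_m$ and bounds $d_R(x_i,r)$ for $r\in B_G(x_i,n)\cap V(R)$ directly; these are the same idea in different clothing.
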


\begin{proof}
    For $L\geq 0$, we set $K_L:=\min\{d_G(x,y)\mid x,y \in V(R), d_R(x,y)=L\}$, and $\rho\colon\N\to\N, L\mapsto K_L$. Then
    \[
    \rho(d_R(x,y))=K_{d_R(x,y)}\leq d_G(x,y)\leq d_R(x,y).
    \]
    Hence, it remains to show that $K_L$ tends to infinity for $L \rightarrow \infty$. For this, let $n \in \N$ be given, and let $x_1,\ldots,x_m$ be representatives of the $\langle\phi\rangle$-orbits on~$R$. Since $G$ is locally finite, the balls $B_G(x_i, n)$ are finite. As there are only finitely many $\langle\phi\rangle$-orbits on~$R$, this implies that there exists some $N_n$ such that $d_R(x_i, r) \leq N_n$ for all $i \leq m$ and $r \in B_G(x_i, n) \cap V(R)$. In particular, as $x_1,\ldots,x_m$ are representatives of the $\langle\phi\rangle$-orbits on~$R$, we have $d_R(r,s) \leq N_n$ for all $r,s \in V(R)$ with $d_G(r,s) \leq n$. This implies that $K_{N_n+1} > n$, and thus $K_L$ tends to infinity for $L \rightarrow \infty$, as desired.
\end{proof}

Combining \cref{lem:NonElliptic:DivergingDoubleRay,lem:NonElliptic:ExistDoubleRay} we obtain that every \lf, connected graph with a non-elliptic automorphism contains a $\phi$-invariant double ray~$R$, and $R$ is coarsely embedded in~$G$. The next two lemmas show that if $G$ is one-ended, and hence all tails of $R$ lie in the same end, then we can find infinitely many rays $R_i$ in~$G$ in `thickened cylinders' around~$R$ of the form $G[R, L_{i+1}]-B_G(R, L_i+i+1)$ for suitable integers $L_0 < L_1 < \ldots \in \N$ with $L_{i+1} - L_{i} \geq i +1$. Hence, $d_G(R_i, R_j) \geq |L_j-L_i|  \geq \max\{i,j\}$ for all $i \neq j \in \N$ as desired. 

Let $G$ be a graph, and let $R$ be a double ray in~$G$.
For $K\in\N$, let $C$ be a component of~$G-B_G(R,K)$.
We say that $C$ is \defn{half-long} if there are infinitely many $r \in V(R)$ such that $C$ has a neighbour in $B_G(r,K)$.
Further, $C$ is \defn{half-thick} if, for
every $L \geq K$, some half-long component of $G - B_G(R, L)$ is contained in~$C$.

\begin{lem}\label{lem:withNonElliptic:halfthick}
    Let $G$ be a one-ended, \lf\ graph. Let $\phi$ be a non-elliptic automorphism of~$G$, and let $R$ be a $\phi$-invariant double ray.
    Then $G-B_G(R,L)$ has at least one half-thick component for every $L \in \N$.
\end{lem}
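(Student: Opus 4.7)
The plan has three steps: identify half-long components as infinite ones, show that at every level $L'\ge L$ at least one half-long component exists, and combine these via compactness using $\phi$-invariance to obtain a half-thick component.

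First I would show a component $C$ of $G-B_G(R,L)$ is half-long if and only if it is infinite. The coarse embedding of $R$ (\cref{lem:NonElliptic:DivergingDoubleRay}) ensures that each vertex of $G$ lies in only boundedly many balls $B_G(r_i,L)$, so $|I(C)|=\infty$ forces $|N(C)|=\infty$ and hence $|C|=\infty$ by local finiteness. Conversely, if $I(C)$ is finite then $N(C)\subseteq B_G(\{r_i:i\in I(C)\},L)$ is finite, so $C$ is a component of $G$ minus this finite set; one-endedness would then force any infinite such $C$ to be the unique infinite component of $G-N(C)$ and so to contain tails of $R$, contradicting $C\cap B_G(R,L)=\emptyset$.

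Next I would show that $G-B_G(R,L')$ has an infinite component for every $L'\ge L$. Note $V(G)\setminus B_G(R,L')$ is infinite, being $\phi$-invariant while $\phi$ has no finite invariant set. Suppose for contradiction that every component is finite, and let $F_N:=B_G(\{r_i:|i|\le N\},L')$. The unique infinite component $D_N$ of $G-F_N$ contains both tails of $R$ for large $N$, and the coarse embedding forbids edges between the ``positive wing'' $W_N^+$ and the ``negative wing'' $W_N^-$ of $B_G(R,L')\setminus F_N$. Hence any $W_N^+$--$W_N^-$ path in $D_N$ must cross a finite bridging component $C_N$ with $I(C_N)$ meeting both $(N,\infty)$ and $(-\infty,-N)$; in particular $\max I(C_N)-\min I(C_N)\ge 2N+1$. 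Let $k_R>0$ be the translation length of $\phi$ on $R$, and choose $m_N$ so that $\max I(\phi^{-m_N}C_N)\in\{-k_R+1,\dots,0\}$. The shifted components $C_N':=\phi^{-m_N}C_N$ are pairwise distinct (their spans are unbounded and preserved by $\phi$), each finite, and each has a neighbour in the finite set $B_G(\{r_i:-k_R<i\le 0\},L')$---contradicting local finiteness, which permits only finitely many components to meet any fixed finite set.

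Finally, for half-thickness, I would show that if $C'$ is half-long in $G-B_G(R,L')$ and $C$ is the unique component of $G-B_G(R,L)$ containing $C'$, then $C$ is also half-long: a witness for $i\in I(C')$ either already lies in $B_G(R,L)$---contributing, up to a bounded shift controlled by coarse embedding, an index to $I(C)$---or lies inside $C$, in which case any shortest path to $r_i$ must exit $C$ through a vertex of $N(C)\cap B_G(r_i,L)$. Writing $\mathcal{H}_{L'}$ for the set of half-long components at level $L'$ and $\pi_{L'}^L$ for the containment map to level $L$, the sets $\pi_{L'}^L(\mathcal{H}_{L'})$ thus form a decreasing chain of non-empty $\phi$-invariant subsets of $\mathcal{H}_L$. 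Every non-empty $\phi$-invariant subset of $\mathcal{H}_L$ must contain a component whose index set meets $[0,k_R-1]$, and there are only finitely many such components by local finiteness; so the decreasing chain intersected with this finite set stabilizes to a non-empty set, any element of which is a half-thick component. The main obstacle is Step 2, where both the coarse embedding and the $\phi$-translation are essential to convert the hypothesis ``every component is finite'' into a contradiction.
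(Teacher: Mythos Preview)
Your argument is correct and takes a genuinely different route from the paper's. The paper constructs an explicit geodesic ray $S$ starting on $R$ and ``orthogonal'' to it (in the sense that $d_G(u,R)=d_S(u,y_1)$ for all $u\in V(S)$), obtained by a compactness argument on $\phi$-shifted shortest paths from $R$ to increasingly far-away vertices; one-endedness then gives infinitely many disjoint $S$--$R$ paths, so the single component of $G-B_G(R,L)$ containing a tail of $S$ is visibly half-thick at all levels simultaneously. Your approach is more structural: the equivalence in Step~1 is a clean standalone fact, and in fact it lets you shortcut the witness-tracing in Step~3 entirely (if $C'\subseteq C$ is infinite then $C$ is infinite, hence half-long by Step~1; note also that your claimed exit vertex need only lie in $N(C)\cap B_G(r_i,L')$, not $B_G(r_i,L)$, though the bounded shift via coarse embedding repairs this). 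Your Step~2 bridging argument is more involved than anything in the paper's proof but exploits the $\phi$-action in a different and pleasant way, translating bridging components into a fixed finite window. The paper's route is shorter and yields a concrete witnessing ray; yours is modular and makes the role of $\phi$-invariance more transparent. One small wrinkle: in Step~2 you assert $V(G)\setminus B_G(R,L')$ is infinite by non-ellipticity, but non-ellipticity only rules out \emph{nonempty} finite invariant sets, so emptiness needs a separate word --- though your own bridging argument already covers it, since if there are no components then $D_N\subseteq W_N^+\sqcup W_N^-$ with no edges between the wings, contradicting that $D_N$ is connected and meets both.
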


\begin{proof}
    Set $R =: \ldots r_{-1}r_0r_1 \ldots$, and let $L \in \N$.
    Let us first show that $G[R, K] \neq G$ for all $K \in \N$. As~$R$ is coarsely embedded in~$G$ by \cref{lem:NonElliptic:DivergingDoubleRay}, there is some $K'$ such that $R_{\leq 0}$ and $R_{\geq K'}$ have distance at least $2K+2$ from each other, which implies that the sets $B_G(R_{\leq 0}, K)$ and $B_G(R_{\geq K'}, K)$ are disjoint and not joined by an edge.
    Hence, every $B_G(R_{\leq 0}, K)$--$B_G(R_{\geq K'}, K)$ path meets either $B_G(r_1Rr_{K'-1}, K)$ or $G - G[R, K]$. But since both $R_{\leq 0}$ and $R_{\geq K'}$ lie in the unique end of $G$, there are infinitely many disjoint such paths, of which at most finitely many can meet the set $B_G(r_1Rr_{K'-1}, K)$, which is finite since $G$ is locally finite. Hence, $G-G[R,K]$ is non-empty for all $K \in \N$.
    
    Thus, there exist vertices $x_1,x_2,\ldots$ such that $d(x_i,R)\geq i$ for all $i\in\N$.
    Let $y_i\in V(R)$ such that $d(x_i,R)=d(x_i,y_i)$.
    Since~$\phi$ acts on~$R$ as a translation, there are only finitely many $\langle\phi\rangle$-orbits on~$R$.
    Hence, there is an infinite subset $I\sub \N$ such that all $y_i$ for $i\in I$ lie in the same $\langle \phi\rangle$-orbit.
    We may assume $1 \in I$.
    For every $i \in I$, let $P_i$ be a shortest $y_i$--$x_i$ path and let $\phi_i\in\langle \phi\rangle$ such that $\phi_i(y_i)=y_1$.
    Then, since $G$ is locally finite, there exists an infinite index set $I_1 \sub I$ such that $\phi_i(P_i)\cap G[y_1,1]$ coincides for all $i\in I_1$, amongst which we again find an infinite index set $I_2 \sub I_1$ such that $\phi_i(P_i)\cap G[y_1,2]$ coincides for all $i\in I_2$ and so on.
    This results in a geodesic ray~$S$ that starts at~$y_1$ and satisfies $d_G(u,R)=d_S(u,y_1)$ for all $u\in V(S)$.
    In particular, $S$ has a tail in $G-B_G(R, K)$ for all $K \in \N$.
    Since $G$ is one-ended, there are infinitely many pairwise disjoint $S$--$R$ paths.
    This shows that the components of $G-B_G(R,K)$ that contain a tail of~$S$ are half-long, and hence the component of $G-B_G(R, L)$ containing a tail of~$S$ is half-thick.    
\end{proof}

\begin{lem}\label{lem:withNonElliptic:main}
    Let $G$ be a one-ended, connected, \lf\ graph with a non-elliptic automorphism. 
    Then there are infinitely many rays $R_0,R_1,\ldots$ such that $d_G(R_i, R_j) \geq \max\{i,j\}$ for all $i \neq j \in \N$.
\end{lem}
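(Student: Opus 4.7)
The plan is to use the $\phi$-invariant double ray~$R$ furnished by \cref{lem:NonElliptic:ExistDoubleRay} (for a non-elliptic automorphism~$\phi$ of~$G$) as a ``spine'' and to construct the rays $R_i$ recursively inside successively deeper annuli around~$R$. I would maintain an increasing integer sequence $0 = L_0 < L_1 < \cdots$ together with rays $R_0, R_1, \ldots$ such that $R_0$ is a tail of~$R$ and, for every $n \geq 1$, every vertex~$v$ of $R_n$ satisfies $L_{n-1} + n + 1 \leq d_G(v, R) \leq L_n$, so that $R_n$ lies in the ``thickened cylinder'' $G[R, L_n] - B_G(R, L_{n-1} + n)$.

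For the inductive step, given $L_0, \ldots, L_i$ and $R_0, \ldots, R_i$, I would invoke \cref{lem:withNonElliptic:halfthick} to pick a half-thick component~$C$ of $G - B_G(R, L_i + i + 1)$. The key sub-step is to find an integer $k \geq 1$ with $\phi^k(C) = C$, so that $\phi^k|_C$ is a non-elliptic automorphism of the connected graph~$C$; then \cref{lem:NonElliptic:ExistDoubleRay} applied inside~$C$ produces a $\phi^k$-invariant double ray $D \subseteq C$. Since $\phi^k$ acts on~$D$ as a translation, $\langle \phi^k \rangle$ has only finitely many orbits on $V(D)$; and since $R$ is $\phi^k$-invariant, the function $v \mapsto d_G(v, R)$ is constant on each such orbit and hence bounded on~$V(D)$. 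Setting $L_{i+1} := \max_{v \in V(D)} d_G(v, R)$ and letting $R_{i+1}$ be a tail of~$D$ then maintains the inductive invariant.

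The distance bound follows immediately from the annular structure: for $i < j$ and any $u \in V(R_i)$, $v \in V(R_j)$,
\[
d_G(u, v) \geq d_G(v, R) - d_G(u, R) \geq (L_{j-1} + j + 1) - L_i \geq j = \max\{i, j\},
\]
where the final inequality uses $L_{j-1} \geq L_i$, which holds because $j - 1 \geq i$ and the $L_n$ are strictly increasing by construction.

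The main obstacle I anticipate is establishing the existence of an integer $k \geq 1$ with $\phi^k(C) = C$. The cyclic group $\langle \phi \rangle$ permutes the half-thick components of $G - B_G(R, L_i + i + 1)$; when some orbit of this action is finite, the construction above succeeds directly. In the remaining case, where $\phi$ acts freely on an infinite family of (pairwise isomorphic) half-thick components, I would try to handle this by enlarging~$C$ to a $\phi$-invariant connected subgraph (for instance the union of the $\phi$-orbit of~$C$ together with a $\phi$-invariant set of connecting paths through $B_G(R, L_i + i + 1)$), applying \cref{lem:NonElliptic:ExistDoubleRay} to this enlargement, and arguing that the resulting $\phi$-invariant double ray can be arranged, after discarding its bounded-distance portions if necessary, to stay at distance strictly greater than $L_i + i + 1$ from~$R$.
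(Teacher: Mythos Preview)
Your overall architecture matches the paper's exactly: build the $R_i$ in nested annuli $G[R,L_i]-B_G(R,L_{i-1}+i)$ around the $\phi$-invariant double ray~$R$, and derive the distance bound from the annular structure. The one genuine gap is precisely the step you flag: showing that some power of~$\phi$ stabilises the chosen component~$C$.

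The good news is that the ``remaining case'' you worry about never occurs, so your fallback construction is unnecessary. The point is that a \emph{half-long} component of $G-B_G(R,L)$ is always stabilised by some $\phi^n$ with $n\neq 0$, and half-thick trivially implies half-long. Here is the pigeonhole argument the paper uses (and which you are missing): since~$C$ is half-long, there are infinitely many $x_1,x_2,\ldots\in V(R)$ with $d_G(x_m,C)=L+1$; as $\langle\phi\rangle$ has only finitely many orbits on~$R$, we may assume all $x_m$ lie in a common orbit, and pick $\phi_m\in\langle\phi\rangle$ with $\phi_m(x_m)=x_1$ and $y_m\in V(C)$ realising $d_G(x_m,y_m)=L+1$. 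Then every $\phi_m(y_m)$ lies in the finite ball $B_G(x_1,L+1)$, so $\phi_i(y_i)=\phi_j(y_j)$ for some $i\neq j$. Since each $\phi_m$ fixes $B_G(R,L)$ setwise, $\phi_i(C)$ and $\phi_j(C)$ are components of $G-B_G(R,L)$ sharing a vertex, hence equal; thus $\phi^n(C)=C$ for $n$ with $\phi^n=\phi_i^{-1}\phi_j$. With this in hand, your route (apply \cref{lem:NonElliptic:ExistDoubleRay} inside~$C$ to the non-elliptic $\phi^n|_C$, then bound $d_G(\cdot,R)$ on the resulting double ray via the finitely many $\langle\phi^n\rangle$-orbits) is perfectly valid and arguably a touch cleaner than the paper's, which instead builds the ray directly as a path inside $\bigcup_{k\in\Z}\phi^{kn}(P)$ for a $y_1$--$\phi^n(y_1)$ path $P\subseteq C$.
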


\begin{proof}
    Let~$\phi$ be a non-elliptic automorphism of~$G$. By \cref{lem:NonElliptic:ExistDoubleRay}, there exists a $\phi$-invariant double ray~$R$ in~$G$.
    By \cref{lem:withNonElliptic:halfthick}, some component~$C$ of $G-B_G(R,0)$ is half-thick.
    Hence, there exists for every $i\in\N$ a half-long component $C_i\subseteq C$ of $G-B_G(R,i)$.
    We will construct an increasing sequence $(K_i)_{i\in\N}$ of integers $K_i \in \N$ and a sequence $(R_i)_{i\in\N}$ of rays such that $R_i$ lies in $G[R,K_i]-B_G(R,K_{i-1}+i)$.
    Then the rays $R_0,R_1,\ldots$ clearly satisfy $d_G(R_i, R_j) \geq \max\{i,j\}$ for all $i \neq j \in \N$ and hence the assertion follows.

    Let $j\in\N$ and assume that we have constructed all $K_i$ and $R_i$ for $i< j$.
    Since $C_{K_{j-1}+j}$ is half-long, there are infinitely many distinct vertices $x_1,x_2,\ldots$ in~$R$ with $d(x_i,C_{K_{j-1}+j})=K_{j-1}+j+1$.
    As $\langle\phi\rangle$ acts as a translation on~$R$, 
    infinitely many of the $x_i$ lie in the same $\langle\phi\rangle$-orbit.
    Hence, we may assume that all~$x_i$ lie in the same $\langle\phi\rangle$-orbit.
    For every $i\in\N$, let $\phi_i\in\langle\phi\rangle$ such that $\phi_i(x_i)=x_1$ and let $y_i\in V(C_{K_{j-1}+j})$ with $d_G(x_i,y_i)=d_G(x_i,C_{K_{j-1}+j})$.
    Since~$G$ is locally finite, the ball $B_G(x_1, K_{j-1}+j+1)$ is finite, and hence there are distinct $i,\ell$ such that $\phi_i(y_i)=\phi_\ell(y_\ell)$. As $\phi_i(R) = R = \phi_\ell(R)$ since $\phi_i,  \phi_\ell \in \langle\phi\rangle$, and because $\phi_i(y_i) = \phi_\ell(y_\ell)$ and $y_i, y_\ell \in V(C_{K_{j-1}+j})$, it follows that $\phi_i(C_{K_{j-1}+j})=\phi_\ell(C_{K_{j-1}+j})$.
    
    Again because $\phi_i, \phi_\ell \in \langle\phi\rangle$, there is $n\in\Z\sm\{0\}$ such that $\phi_i^{-1}\phi_\ell=\phi^n$.
    Then, $\phi^n(C_{K_{j-1}+j}) = \phi^{-1}_i(\phi_\ell(C_{K_{j-1}+j})) = C_{K_{j-1}+j}$; in particular, $\phi^{kn}(y_1) \in C_{K_{j-1}+j}$ for all $k \in \Z$.
    Thus, $\phi^{kn}(x_1)$ has distance $K_{j-1}+j+1$ to~$C_{K_{j-1}+j}$ for every $k \in \Z$.
    Let $P$ be a $y_1$--$\phi^n(y_1)$ path in $C_{K_{j-1}+j}$.
    Then $\bigcup_{k\in\Z}\phi^{kn}(P)$ is connected and contained in $G[R,K_j]-B_G(R,K_{j-1}+j)$ where $K_j$ is equal to the length of~$P$ plus~$K_{j-1}+j+1$.
    Moreover, $\bigcup_{k\in\Z}\phi^{kn}(P)$ is connected, and it is infinite since $\phi$ is not elliptic. Hence, as $G$ is locally finite, $\bigcup_{k\in\Z}\phi^{kn}(P)$ contains a ray $R_j$ (see e.g.\ \cite[Proposition 8.2.1]{Bibel}), which thus lies in $G[R,K_j]-B_G(R,K_{j-1}+j) \supseteq \bigcup_{k\in\Z}\phi^{kn}(P)$.
\end{proof}

\subsection{All automorphisms of \texorpdfstring{$G$}{G} are elliptic} \label{subsec:Elliptic}

We show that every one-ended, \qt, \lf\ graph~$G$ whose automorphisms are all elliptic contains infinitely many rays that are pairwise far apart. For this, we first show in \cref{lem:onlyElliptic:geodesicsDiverge} below that if an elliptic automorphism $\phi$ maps the first vertex~$r_0$ of a `geodesic' ray~$R$ to another vertex of~$R$ far enough away from~$r_0$, then~$\phi(R)$ has a tail that is far apart from~$R$. 
We then apply this result infinitely often to find infinitely many rays in~$G$ such that, for every $n \in \N$, the $n$-th ray has a tail with distance at least~$n$ from all previously chosen rays. 

A subgraph~$X$ of~$G$ is \defn{geodesic (in~$G$)} if for every two vertices $u, v \in V(X)$ we have $d_X(u, v) = d_G(u, v)$.

\begin{lem}\label{lem:onlyElliptic:geodesicsDiverge}
    Let $K\in\N$, let $G$ be a \lf\ graph, and let $R=r_0r_1\ldots$ be a geodesic ray in~$G$.
    Let~$\phi$ be an elliptic automorphism of~$G$ such that $\phi(r_0)=r_d$ for some $d\geq 4K$.
    Then there exists $j\in\N$ such that $d_G(R_{\geq j}, \phi(R)) \geq K$.
\end{lem}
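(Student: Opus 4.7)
The plan is to argue by contradiction and to derive from failure of the conclusion that some $\phi$-orbit must be infinite, contradicting ellipticity. Suppose that for every $j \in \N$ there is some $i \geq j$ with $d_G(r_i, \phi(R)) < K$. Then we obtain an infinite sequence $i_1 < i_2 < \dots$ together with indices $m_k \geq 0$ satisfying $d_G(r_{i_k}, \phi(r_{m_k})) \leq K - 1$. Using that both $R$ is geodesic and $\phi(R)$ is geodesic (the latter since $\phi$ is an isometry), we have $d_G(r_{i_k}, r_d) = i_k - d$ and $d_G(\phi(r_0), \phi(r_{m_k})) = m_k$, so the triangle inequality applied to the triple $r_d, \phi(r_{m_k}), r_{i_k}$ forces
\[
    |m_k - (i_k - d)| \leq K - 1.
\]
Consequently $\phi^{-1}(r_{i_k})$ lies within distance $2K - 2$ of $r_{i_k - d}$, and in particular $d_G(r_{i_k}, \phi^{-1}(r_{i_k})) \geq d - (2K - 2) \geq 2K + 2 > 0$, using $d \geq 4K$.

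Next, I would exploit that $\phi$ is elliptic and $G$ is locally finite to conclude that every $\phi$-orbit is finite: if $F$ is a finite $\phi$-invariant set, then $d_G(\phi^n(v), F) = d_G(v, F)$ for every $n$ and $v$, so the orbit of any vertex $v$ lies inside the sphere $\{u \in V(G) : d_G(u, F) = d_G(v, F)\}$, which is finite by local finiteness. In particular, the orbit of each $r_{i_k}$ under $\langle \phi \rangle$ has finite size.

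To close the argument, I would iterate the shift estimate to produce arbitrarily many distinct elements inside a single orbit. Applying $\phi^{-1}$ repeatedly should give that $\phi^{-n}(r_{i_k})$ lies within roughly $2n(K - 1)$ of $r_{i_k - nd}$; since $d \geq 4K$, each step decreases distance from $r_0$ along $R$ by at least $d - 2(K-1) \geq 2K + 2 > 0$, so the iterates $r_{i_k}, \phi^{-1}(r_{i_k}), \phi^{-2}(r_{i_k}), \dots$ are pairwise distinct, giving at least $\lfloor i_k / d \rfloor$ elements in the orbit of $r_{i_k}$ --- unbounded as $k \to \infty$, the desired contradiction. The main obstacle is that the shift estimate was derived only for vertices $r_i$ whose index lies in the \emph{a priori} sparse set $I := \{i : d_G(r_i, \phi(R)) < K\}$, whereas the iterates $\phi^{-n}(r_{i_k})$ are only \emph{close} to $R$ rather than on it. I would handle this by upgrading the shift estimate to arbitrary vertices in a $(2K-2)$-neighborhood of $R$ via the rigidity of $R$ as a geodesic together with the isometric action of $\phi$; alternatively, one could pick the initial $i_k$ by a pigeonhole argument so that the approximate arithmetic progression $i_k, i_k - d, i_k - 2d, \dots$ passes sufficiently close to elements of $I$. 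The hypothesis $d \geq 4K$ (rather than something weaker like $d > 2K$) appears to be calibrated precisely to leave enough slack for this iteration to go through.
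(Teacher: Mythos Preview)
Your overall strategy matches the paper's: argue by contradiction and derive that some $\langle\phi\rangle$-orbit is infinite from a ``$\phi$ shifts along $R$ by roughly $d$'' estimate; your initial bound $|m_k-(i_k-d)|\le K-1$ is correct and is essentially the paper's starting point as well. But there are two genuine gaps. The iteration problem you flag is real, and neither proposed fix works: upgrading the shift estimate to vertices merely near $R$ still requires those vertices to be near $\phi(R)$, which is exactly membership in~$I$; and pigeonhole cannot force the merely infinite set $I$ to contain (or come close to) an arithmetic progression of step~$d$.

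More seriously, even granting your iterated estimate, the conclusion does not contradict ellipticity. You produce at most $\lfloor i_k/d\rfloor$ distinct elements in the orbit of $r_{i_k}$ and then let $k\to\infty$; but these are \emph{different} orbits for different~$k$, and ellipticity only guarantees that each orbit is finite, not that orbit sizes are uniformly bounded (your own sphere argument shows the bound depends on $d_G(r_{i_k},F)$, and elliptic automorphisms of locally finite graphs can have infinite order with unbounded orbit sizes). The paper repairs both issues simultaneously by iterating $\phi$ forward on the \emph{fixed} vertex~$r_0$ and using the $r_{i_j}$ as checkpoints. After arranging $i_{j+1}-i_j\ge d+2K$, one proves $d_G(\phi(r_{i_j}),r_{i_{j+1}})\le (i_{j+1}-i_j)-1$ (this is where $d\ge 4K$ is used), and telescoping yields $d_G(\phi^n(r_0),r_{i_n})\le i_n-n$, whence $d_G(r_0,\phi^n(r_0))\ge n\to\infty$. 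The point is that each step goes from one checkpoint in $I$ to the next, so the shift estimate is always available and no error accumulates.
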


\begin{proof}
    Let us suppose for a contradiction that $d_G(R_{\geq j},\phi(R))<K$ for all $j\in\N$.
    We show that $d_G(r_0, \varphi^n(r_0))$ tends to infinity for $n \rightarrow \infty$. This then yields a contradiction as $\phi$ is elliptic, and hence the $\phi$-orbit of~$r_0$ is finite.
    
    Set $i_0 := 0$.
    Since $d_G(R_{\geq j}, \phi(R))<K$ for all $j\in\N$, there exist $d=:i_1<i_2<\ldots \in \N$ such that $d_G(r_{i_j},\phi(R))<K$ for all $j\in \N_{\geq 1}$. Set $\ell_j := i_{j+1}-i_j$ for all $j \in \N$. By restricting to a subsequence of~$(i_j)_{j \in \N}$ if necessary, we may assume that $\ell_j\geq d+2K$ for all $j\in \N_{\geq 1}$.
    For $j \in \N_{\geq 1}$, let $y_j$ be in~$\phi(R)$ with $d_G(r_{i_j},\phi(R))=d_G(r_{i_j},y_j)$, and set $\ell_j':=d_G(\phi(r_{i_j}),y_{j+1})$ for all $j\geq 1$.
    In particular, note that $\phi(r_{i_j})$ appears on $\phi(R)$ between $y_j$ and $y_{j+1}$ as $d_G(r_{i_1}, y_j) < d_G(r_{i_1}, \phi(r_{i_j}))$ and $d_G(r_{i_1}, y_{j+1}) > d_G(r_{i_1}, \phi(r_{i_j}))$, which can be easily calculated (by using $\ell_j \geq d+K$ for the second inequality).
    Then
    \begin{equation} \label{eq:Elliptic:1}
        \begin{aligned}
            \ell_j'+d_G(y_j,\phi(r_{i_j})) &= d_G(\phi(r_{i_j}),y_{j+1}) +  d_G(y_j,\phi(r_{i_j}))= d_G(y_j,y_{j+1})\\
            & \leq d_G(y_j, r_{i_j}) + d_G(r_{i_j}, r_{i_{j+1}}) + d_G(r_{i_{j+1}}, y_{j+1}) < K + \ell_j + K =\ell_j+2K,
        \end{aligned}
    \end{equation}
    where we used for the second equality that $\phi(R)$ is geodesic and that $\phi(r_{i_j})$ appears on $\phi(R)$ between $y_j$ and $y_{j+1}$. Since $\phi(r_0) = r_{i_1} = r_d$, we have
    \begin{align*}
        i_j &= d_G(r_{0}, r_{i_j}) = d_G(\phi(r_0), \phi(r_{i_j})) = d_G(r_{i_1},\phi(r_{i_j}))\\
        &\leq d_G(r_{i_1},r_{i_j})+d_G(r_{i_j},y_j)+d_G(y_j,\phi(r_{i_j})) < (i_j-d) +K+d_G(y_j,\phi(r_{i_j})),
    \end{align*}
    where we again used that $\phi(R)$ is geodesic and that $y_j$ appears on $\phi(R)$ before $\phi(r_{i_j})$. Hence, it follows that $d_G(y_j,\phi(r_{i_j}))>d-K$.
    Combining this with \eqref{eq:Elliptic:1} yields
    \[
    \ell_j'<\ell_j+2K-d_G(y_j,\phi(r_{i_j}))<\ell_j+3K-d, 
    \]
    and hence
    \begin{equation} \label{eq:Elliptic:2}
    d_G(\phi(r_{i_j}), r_{i_{j+1}}) \leq d_G(\phi(r_{i_j}), y_{j+1}) + d_G(y_{j+1}, r_{i_{j+1}}) < \ell'_j + K < (\ell_j+3K-d) + K \leq \ell_j 
    \end{equation}
    as $d \geq 4K$.
    For the next sequence of inequalities, we recall that $\phi^0 = \mathrm{id}$; in particular $\phi^0(r_{i_n}) = r_{i_n}$.
    Since $d_G(r_0,\phi^n(r_0))+d_G(\phi^n(r_0),r_{i_n})\geq d_G(r_0,r_{i_n})=i_n$, we conclude
    \begin{align*}
        d_G(r_0,\phi^n(r_0))&\geq i_n-d_G(\phi^n(r_0),r_{i_n})
        \geq i_n - \sum_{j=0}^{n-1} d_G(\phi^{j+1}(r_{i_{n-j-1}}),\phi^j(r_{i_{n-j}}))\\
        \intertext{(where we used the generalised triangle inequality for the second inequality)} 
        &= i_n - \sum_{j=0}^{n-1} d_G(\phi(r_{i_{n-j-1}}), r_{i_{n-j}})
        \geq i_n-\sum_{j=0}^{n-1}(\ell_{n-j-1}-1)\\
        \intertext{(where we used for the first (in)equality that automorphisms preserve distances and \eqref{eq:Elliptic:2} for the second)}
        &= i_n - \sum_{j=0}^{n-1} (i_{n-j}-i_{n-j-1}-1) = i_n - (i_n-i_0 - n) = n.
    \end{align*}
    Since this last expression tends to infinity for $n\to\infty$, this contradicts $\phi$ being elliptic.
\end{proof}

\begin{lem}\label{lem:onlyElliptic:main}
    Let $K\in\N$ and let $G$ be an infinite, connected, \qt, \lf\ graph all of whose automorphisms are elliptic.
    Then there are infinitely many geodesic rays $R_0,R_1,\ldots$ such that $d_G(R_i, R_j) \geq \max\{i,j\}$ for all $i \neq j \in \N$.
\end{lem}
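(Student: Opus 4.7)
The plan is to build the rays $R_i$ recursively, each a deep tail of the image of a fixed geodesic ray $R$ in $G$ under a suitable elliptic automorphism, using \cref{lem:onlyElliptic:geodesicsDiverge} and quasi-transitivity to control pairwise distances.

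First I would fix a geodesic ray $R = r_0 r_1 r_2 \dots$ in $G$ (which exists since $G$ is infinite, connected and locally finite). Quasi-transitivity gives only finitely many vertex orbits of $\mathrm{Aut}(G)$, so by pigeonhole some orbit $\mathcal{O}$ meets $V(R)$ infinitely often; after relabelling, $r_0 \in \mathcal{O}$, and I can fix indices $n_1 < n_2 < \dots$ with $r_{n_k} \in \mathcal{O}$ and, by passing to a subsequence, $n_k \geq 4k$. For each $k$ I would fix an elliptic automorphism $\phi_k$ with $\phi_k(r_0) = r_{n_k}$, which is possible since $r_0, r_{n_k} \in \mathcal{O}$ and all automorphisms of $G$ are elliptic by hypothesis.

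I would then build $(R_i)_{i \in \N}$ recursively. Set $R_0$ to be a deep tail of $R$; at stage $i \geq 1$ I pick a large index $k_i$ and declare $R_i$ to be a deep tail of $\phi_{k_i}(R)$. Applying \cref{lem:onlyElliptic:geodesicsDiverge} to $R$, $\phi_{k_i}$ and $K = i$ (using $n_{k_i} \geq 4i$) produces some $j$ with $d_G(R_{\geq j}, \phi_{k_i}(R)) \geq i$, yielding $d_G(R_0, R_i) \geq i$ once I shorten $R_0$ to $R_{\geq j}$. For $1 \leq \ell < i$, applying the distance-preserving $\phi_{k_\ell}^{-1}$ reduces the bound $d_G(R_\ell, R_i) \geq i$ to the distance of a tail of $R$ from $\phi_{k_\ell}^{-1}\phi_{k_i}(R)$; since $\phi_{k_\ell}^{-1}\phi_{k_i}$ is elliptic by hypothesis, \cref{lem:onlyElliptic:geodesicsDiverge} applies as soon as this composition maps $r_0$ to a vertex of $R$ at distance at least $4i$ from $r_0$, and the resulting distance bound transports back through $\phi_{k_\ell}$ after trimming $R_\ell$ to a further tail if necessary.

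The main obstacle is precisely this simultaneity requirement: a single $k_i$ must make $\phi_{k_\ell}^{-1}\phi_{k_i}(r_0) = \phi_{k_\ell}^{-1}(r_{n_{k_i}})$ lie on $V(R)$ (not merely in $\mathcal{O}$), and far enough from $r_0$, for every $\ell < i$ at once. Because only finitely many such constraints appear at stage $i$ while infinitely many candidate $\phi_k$ are available, the right $k_i$ should be extractable by a finite-intersection pigeonhole argument --- for instance, replacing $\phi_{k_i}$ with an iterated product $\phi_{k_1}\phi_{k_2}\cdots\phi_{k_i}$ and choosing each factor in turn so that the partial products always land back on $R$. Coupled with routine tail-shortening to absorb finite initial segments, this should yield the desired rays.
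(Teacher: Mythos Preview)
Your framework is right and you have correctly put your finger on the crux: \cref{lem:onlyElliptic:geodesicsDiverge} only applies to an elliptic automorphism that sends the first vertex of the geodesic ray to another vertex \emph{of that same ray}, and $\phi_{k_\ell}^{-1}\phi_{k_i}(r_0)=\phi_{k_\ell}^{-1}(r_{n_{k_i}})$ has no reason to lie on~$R$. Your proposed fix, however, does not close this gap. If $\psi_i:=\phi_{k_1}\cdots\phi_{k_i}$, then the automorphism you must feed into the lemma when comparing $R_\ell$ with $R_i$ is $\psi_\ell^{-1}\psi_i=\phi_{k_{\ell+1}}\cdots\phi_{k_i}$, and for this to send $r_0$ into $V(R)$ you need $r_{n_{k_i}}\in\bigcap_{\ell<i}(\phi_{k_{\ell+1}}\cdots\phi_{k_{i-1}})^{-1}(V(R))$. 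That is a finite intersection of images of~$R$ under automorphisms, not the complement of a finite set, so there is no pigeonhole reason why it should meet $\{r_{n_k}:k\in\N\}$ at all, let alone far out. A second issue is that in your scheme the tail produced by the lemma corresponds to a tail of the \emph{old} ray $R_\ell$, so each $R_\ell$ must be shortened once for every later stage $i>\ell$; that is infinitely many trims, and nothing prevents them from exhausting the ray.

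The paper resolves both issues with one telescoping choice: pick $\phi_j$ so that $\phi_j(r_{i_{j-1}})=r_{i_j}$ (rather than $\phi_j(r_0)\in V(R)$), with $i_j-i_{j-1}\ge 4j$, and set $R'_j:=\phi_1^{-1}\circ\cdots\circ\phi_j^{-1}(R_{\ge i_j})$. For $k<j$ the relevant automorphism is then $\phi:=\phi_j\circ\cdots\circ\phi_{k+1}$, which by construction telescopes to send $r_{i_k}$ to $r_{i_j}$, so \cref{lem:onlyElliptic:geodesicsDiverge} applies to $\phi$ and the geodesic ray $R_{\ge i_k}$ with no further hypothesis. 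Since $\phi_1^{-1}\circ\cdots\circ\phi_j^{-1}\circ\phi=\phi_1^{-1}\circ\cdots\circ\phi_k^{-1}$, both $R'_k$ and the candidate tail of $R'_j$ are images under the same isometry $\phi_1^{-1}\circ\cdots\circ\phi_j^{-1}$, so the distance bound transfers directly, and it is the \emph{new} ray $R'_j$ that gets trimmed --- once for each $k<j$, hence only finitely often.
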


\begin{proof}
    Since $G$ is infinite, connected, and \lf, a standard compactness argument yields a geodesic ray $R=r_0r_1\ldots$ in~$G$. 
    Since $G$ is \qt, some orbit under the automorphism group of~$G$ contains infinitely many vertices of~$R$.
    We may assume that~$r_0$ lies in this orbit.
    Then there is an automorphism~$\phi_1$ of~$G$ such that $\phi_1(r_0)=r_{i_1}$ for some $i_1\geq 4$.
    For general $j$, we choose $\phi_j$ such that $\phi_j(r_{i_{j-1}})= r_{i_j}$ for some $i_j \in \N$ with $i_j-i_{j-1}\geq 4j$.
    Set $R'_0 := R$, and $R'_j := \phi_1^{-1}\circ \ldots \circ \phi_j^{-1}(R_{\geq i_j})$ for all $j \geq 1$.
    We claim that for every $k < j$ the ray~$R'_j$ has a tail~$R^k_j$ that has distance at least $j$ from $R'_{k}$. The assertion then follows by setting $R_j := \bigcap_{k < j} R^k_j$, which is still a ray as $j$ is finite. 

    So let $k < j$ be given. Since all automorphisms of~$G$ are elliptic, also $\phi := \phi_j \circ \ldots \circ \phi_{k+1}$ is elliptic. As also $\phi(r_{i_k}) = r_{i_j}$ and $i_j - i_k \geq i_j - i_{j-1} \geq 4j$, applying \cref{lem:onlyElliptic:geodesicsDiverge} to $R_{\geq i_k}$, $\phi$ and $K := j$ shows that there is some $n \geq i_j$ such that $d_G(R_{\geq n}, \phi(R_{\geq i_k})) \geq j$. 
    Set $R^k_j := \phi_1^{-1}\circ \ldots \circ \phi_j^{-1}(R_{\geq n})$. As $R'_k = \phi_1^{-1}\circ \ldots \circ \phi_k^{-1}(R_{\geq i_k}) = \phi_1^{-1}\circ \ldots \circ \phi_j^{-1}(\phi(R_{\geq i_k}))$ and because automorphisms preserve distances, it follows that $d_G(R_j^k, R'_k) = d_G(R_{\geq n}, \phi(R_{\geq i_k})) \geq j$. Since $R'_j = \phi_1^{-1} \circ \ldots \circ \phi_j^{-1}(R_{\geq i_j})$ and $n \geq i_j$, we have $R_j^k \subseteq R'_j$, and hence $R_j^k$ is as desired.
\end{proof}

\subsection{Proof of \texorpdfstring{\cref{thm:QTOneEnd:HalfGrid}}{Theorem 1}} \label{subsec:ProofOfThm1}

We can now prove \cref{thm:QTOneEnd:HalfGrid}, by combining \cref{lem:onlyElliptic:main,lem:withNonElliptic:main} with \cref{thm:Halin:HalfGrid:UltraFat}.

\begin{proof}[Proof of \cref{thm:QTOneEnd:HalfGrid}.]
    Recall that (the main statement of) \cref{thm:QTOneEnd:HalfGrid} asserts that every one-ended, quasi-transitive, locally finite graph $G$ contains the half-grid as an ultra-fat minor.
    
    If some automorphism of $G$ is non-elliptic, then \cref{lem:withNonElliptic:main} implies that $G$ contains infinitely many rays $R_0,R_1,\ldots$ such that $d_G(R_i, R_j) \geq \max\{i,j\}$ for all $i \neq j \in \N$.
    If all automorphisms of $G$ are elliptic, then we obtain the same by \cref{lem:onlyElliptic:main}.
    Thus, \cref{thm:Halin:HalfGrid}~\ref{itm:Halin:HG:UF:Copy} implies that~$G$ contains an ultra-fat model of the half-grid.

    The `in particular'-part of \cref{thm:QTOneEnd:HalfGrid} (that $G$ then also contains the half-grid as an asymptotic and diverging minor) follows by \cref{obs:UFHGImpliesAsympHG} and \cref{prop:UFHGImpliesDivHG}.
\end{proof}

\section{Applications of \texorpdfstring{\cref{main:HG:QT:OneEnded}}{Theorem 1}} \label{sec:ProofsOfMainResults}

In this section we present some applications of \cref{main:HG:QT:OneEnded}; in particular, we show \cref{main:HalfGrid:QuasiIsoToTree,main:DivergingRays}.

\subsection{Half-grid minors in multi-ended, quasi-transitive graphs}

As a corollary of \cref{main:HG:QT:OneEnded}, we obtain two results on \qt, \lf\ graphs that need not be one-ended.

A finite set $U \subseteq V(G)$ \defn{distinguishes} two ends $\eps, \eps'$ of $G$ if no component of $G-U$ contains rays from both $\eps$ and $\eps'$. 
An end~$\eps$ of~$G$ is \defn{accessible} if there exists some $n \in \N$ such that every other end of~$G$ can be distinguished from~$\eps$ by a set of at most~$n$ vertices of~$G$.
A graph~$G$ is \defn{accessible} if there exists some $n \in \N$ such that every two distinct ends of~$G$ can be distinguished by a set of at most~$n$ vertices of~$G$.

\begin{cor}\label{thm:QTAccess:HalfGrid}
    Let $\eps$ be an accessible, thick end of a \qt, locally finite graph~$G$.
    Then~$G$ contains an ultra-fat model of the half-grid all of whose rays are $\eps$-rays.
\end{cor}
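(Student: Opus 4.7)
The plan is to reduce the statement to \cref{main:HG:QT:OneEnded} by passing to a one-ended subgraph on which a large enough subgroup of $\Aut(G)$ still acts quasi-transitively. Since $\eps$ is accessible and $G$ is quasi-transitive, a standard argument (underlying the Thomassen--Woess style tree-decomposition of accessible quasi-transitive locally finite graphs) yields a finite set $S \subseteq V(G)$ such that the component $C$ of $G - S$ containing a tail of every $\eps$-ray has precisely one end (whose rays correspond to the $\eps$-rays of $G$ with tails in $C$), and such that the setwise stabiliser $\Gamma := \{\phi \in \Aut(G) : \phi(V(C) \cup S) = V(C) \cup S\}$ acts quasi-transitively on $V(C)$.

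Let $H := G[V(C) \cup S]$. Then $H$ is a one-ended, locally finite graph on which $\Gamma$ acts quasi-transitively. An inspection of the proofs of \cref{lem:withNonElliptic:main} and \cref{lem:onlyElliptic:main} reveals that they only require the existence of \emph{some} group acting quasi-transitively on the vertex set of the ambient graph (not necessarily the full automorphism group). Applied to $H$ and $\Gamma$, these lemmas produce infinitely many rays $R_0, R_1, \ldots$ in $H$ with $d_H(R_i, R_j) \geq \max\{i, j\}$ for all $i \neq j$. Since $H$ is one-ended and every ray of $H$ is an $\eps$-ray of $G$, each $R_i$ is an $\eps$-ray in $G$.

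The main obstacle is that distances in $H$ may strictly exceed distances in $G$, since $G$-paths between vertices of $V(C)$ can take shortcuts via $S$ through the other components of $G - S$. To address this, I would use $\Gamma$ to translate the entire construction deep into $C$, arguing that if the reference ray used in \cref{lem:withNonElliptic:main} or \cref{lem:onlyElliptic:main} is placed at large $G$-distance from $S$, then so are all the resulting rays $R_i$, because the construction confines $R_i$ to a controlled neighbourhood of the reference ray. Any candidate $G$-shortcut between $R_i$ and $R_j$ would have to pass through $S$, and the required detour would then exceed $\max\{i, j\}$, forcing $d_G(R_i, R_j) \geq \max\{i, j\}$. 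Applying \cref{thm:Halin:HalfGrid}\ref{itm:Halin:HG:UF:Copy} to the rays $R_i$, viewed now inside $G$, yields an ultra-fat model of the half-grid in $G$ all of whose rays are $\eps$-rays.
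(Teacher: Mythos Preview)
Your overall strategy---pass to a one-ended subgraph on which a group still acts quasi-transitively, and then invoke \cref{main:HG:QT:OneEnded} or its ingredients---is the paper's strategy too. The difference lies in how the distance distortion between the subgraph and~$G$ is handled. The paper cites \cite[Lemma~4.6]{AH24} to obtain a one-ended, quasi-transitive subgraph $X \subseteq G$ that is moreover \emph{quasi-geodesic} in~$G$, i.e.\ $d_X \leq c\cdot d_G$ for some constant~$c$. It then applies \cref{main:HG:QT:OneEnded} directly to~$X$; since~$X$ is $c$-quasi-geodesic, the submodel of the resulting ultra-fat model corresponding to $\N^2\sm[cK]^2$ is already $K$-fat in~$G$ for every~$K$, and a trivial re-indexing gives an ultra-fat model in~$G$. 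No further work is needed.

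Your proposed fix for the distortion has a genuine gap. The claim that ``the construction confines $R_i$ to a controlled neighbourhood of the reference ray'' is false in both cases: in \cref{lem:withNonElliptic:main} the ray~$R_i$ lies in $G[R,K_i]$ with $K_i\to\infty$, and in \cref{lem:onlyElliptic:main} the rays are images of tails of~$R$ under compositions of automorphisms, with no neighbourhood control at all. Consequently no single $\Gamma$-translate can place all the~$R_i$ at $G$-distance greater than~$i$ from~$S$ simultaneously. Your route \emph{can} be salvaged, but by a different argument: since~$S$ is finite and~$G$ locally finite, each~$R_i$ has a tail~$R_i'$ with $d_G(R_i',S)>i$; any $G$-path from~$R_i'$ to~$R_j'$ then either stays in $V(H)$ (and, as~$H$ is induced, has length $\geq d_H(R_i,R_j)\geq\max\{i,j\}$) or passes through~$S$ (and has length $>i+j$). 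But that is not the argument you wrote, and the paper's quasi-geodesic route avoids the issue altogether.
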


\begin{proof}
    By the same argument as in \cite[Lemma~4.6]{AH24}, there exists a connected, one-ended, \qg,\footnote{A subgraph $X$ of $G$ is \defn{quasi-geodesic} in~$G$ if there exists some $c \in \N$ such that $d_X(u,v) \leq c\cdot d_G(u,v)$ for all $u,v \in V(X)$.} \qt\ subgraph $X$ of~$G$ such that every ray in~$X$ is an $\eps$-ray in~$G$.
    By \cref{thm:QTOneEnd:HalfGrid}, there is an ultra-fat model $(\cV,\cE)$ of the half-grid in~$X$. Since~$X$ is $c$-quasi-geodesic for some $c \in \N$, the submodel of $(\cV, \cE)$ corresponding to $\N^2\sm [cK]^2$ is $K$-fat in~$G$ for all $K \in \N$. It is now straightforward to turn $(\cV, \cE)$ into an ultra-fat model of the half-grid in~$G$.
    In fact, as every ray in~$X$ is an $\eps$-ray in~$G$, we find $HG \prec_{UF}^\eps G$.  
\end{proof}

\begin{cor} \label{cor:HalfGrid:AccessibleGraph}
    Let $G$ be an accessible, \qt, \lf\ graph with a thick end.
    Then the half-grid is an ultra-fat minor of~$G$.
\end{cor}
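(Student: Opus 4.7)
The plan is to derive this corollary immediately from \cref{thm:QTAccess:HalfGrid}. The hypothesis there differs from the present one only in that it requires a specific end of $G$ to be accessible, rather than the graph $G$ as a whole. My proof will therefore consist of nothing more than the observation that accessibility of the graph $G$ implies that every end of $G$ is accessible, which is a direct consequence of the two definitions.

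Concretely, I would first fix some thick end $\eps$ of $G$, which exists by assumption. Next, using accessibility of $G$, I would pick an integer $n \in \N$ such that every two distinct ends of $G$ can be distinguished by a set of at most $n$ vertices. Then in particular every end $\eps' \neq \eps$ of $G$ can be distinguished from $\eps$ by at most $n$ vertices, witnessing that $\eps$ itself is an accessible end of $G$.

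Finally, I would invoke \cref{thm:QTAccess:HalfGrid} applied to this accessible thick end $\eps$ of the \qt, \lf\ graph $G$ to obtain an ultra-fat model of the half-grid in $G$, which is all that is claimed. No step here is expected to present any difficulty; the corollary is essentially a restatement of \cref{thm:QTAccess:HalfGrid} under the stronger, more global accessibility hypothesis, and the only content is unpacking the two notions of accessibility.
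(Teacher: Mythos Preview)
Your proposal is correct and matches the paper's own proof essentially verbatim: the paper also simply notes that every end of an accessible graph is accessible (immediate from the definitions) and then invokes \cref{thm:QTAccess:HalfGrid}.
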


\begin{proof}
    Since all ends in an accessible, \qt, \lf\ graph are accessible, the claim follows directly from \cref{thm:QTAccess:HalfGrid}.
\end{proof}

For two graphs $G$ and $H$, a map $f\colon V(H) \to V(G)$ is a \defn{quasi-isometry} if there exist $M \geq 1$ and $A \geq 0$ such that
\begin{itemize}
    \item $M^{-1}\cdot d_H(u, v) - A \leq d_G(f(u), f(v)) \leq M\cdot d_H(u, v) + A$ for all $u, v \in V(H)$, and
    \item $d_G(f(V(H)), w) \leq A$ for all $w \in V(G)$.
\end{itemize}
Two graphs are \defn{\qi} if there exists a quasi-isometry between them.

Finally, we can prove \cref{main:HalfGrid:QuasiIsoToTree}.
In fact, we prove the following more detailed version:

\begin{customcor}{\cref*{main:HalfGrid:QuasiIsoToTree}$^\prime$}
\label{cor:HalfGrid:QuasiIsoToTree}
    \emph{Let $G$ be an accessible, connected, \qt, locally finite graph. Then the following are equivalent:
    \begin{enumerate}[label=\rm{(\roman*)}]
        \item \label{itm:QuasiIsoToTree:1} $G$ has a thick end.
        \item \label{itm:QuasiIsoToTree:2} The half-grid is an ultra-fat minor of~$G$.
        \item \label{itm:QuasiIsoToTree:3} The half-grid is an asymptotic minor of~$G$.
        \item \label{itm:QuasiIsoToTree:4} The half-grid is a diverging minor of~$G$.
        \item \label{itm:QuasiIsoToTree:5} $G$ is not quasi-isometric to a tree.
    \end{enumerate}}
\end{customcor}

\begin{proof}
    \ref{itm:QuasiIsoToTree:1} $\Leftrightarrow$ \ref{itm:QuasiIsoToTree:5} is a result by Kr\"on and M\"oller \cite[Theorem~5.5]{KM08}. \ref{itm:QuasiIsoToTree:1} $\Rightarrow$ \ref{itm:QuasiIsoToTree:2} is \cref{cor:HalfGrid:AccessibleGraph}. \ref{itm:QuasiIsoToTree:2} $\Rightarrow$ \ref{itm:QuasiIsoToTree:3} is \cref{obs:UFHGImpliesAsympHG}, and \ref{itm:QuasiIsoToTree:2} $\Rightarrow$ \ref{itm:QuasiIsoToTree:4} is \cref{prop:UFHGImpliesDivHG}. Finally, \ref{itm:QuasiIsoToTree:3} $\Rightarrow$ \ref{itm:QuasiIsoToTree:1} and \ref{itm:QuasiIsoToTree:4} $\Rightarrow$ \ref{itm:QuasiIsoToTree:1} hold because the rays in a model of the half-grid are all equivalent, and thus the end of~$G$ that contains them must be thick.
\end{proof}

We conjecture that \cref{cor:HalfGrid:QuasiIsoToTree} holds for all connected, \qt, \lf\ graphs (even for those that are not accessible). To prove this, it would suffice to show the following:

\begin{conj} \label{conj:QuasiIsoToTree}
    Every connected, \qt, \lf\ graph with a thick end contains an ultra-fat model of the half-grid.
\end{conj}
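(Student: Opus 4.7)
The plan is to reduce \cref{conj:QuasiIsoToTree} to constructing, for a fixed thick end $\eps$ of $G$, infinitely many $\eps$-rays $R_0,R_1,\ldots$ in $G$ with $d_G(R_i,R_j)\geq\max\{i,j\}$ for all $i\neq j$; \cref{thm:Halin:HalfGrid}~\ref{itm:Halin:HG:UF:Copy} then supplies an ultra-fat model of the half-grid whose rays all lie in $\eps$. The strategy is to mimic the dichotomy of \cref{subsec:NonElliptic,subsec:Elliptic}, but relativised to the stabiliser $\Gamma_\eps:=\{\phi\in\Aut(G):\phi(\eps)=\eps\}$ instead of the full automorphism group, so that every ray we produce stays in $\eps$ throughout the construction, even when $G$ has other (possibly thick) ends and is non-accessible.

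In the first case some $\phi\in\Gamma_\eps$ is non-elliptic. \cref{lem:NonElliptic:ExistDoubleRay} gives a $\phi$-invariant double ray, and standard dynamics (the two limit ends of a hyperbolic-type $\phi$ with $\phi(\eps)=\eps$) should let us choose such a double ray $R$ with both tails in $\eps$, after possibly replacing $\phi$ by $\phi^{-1}$ or a power. Once such an $R$ is available, the arguments of \cref{lem:NonElliptic:DivergingDoubleRay,lem:withNonElliptic:halfthick,lem:withNonElliptic:main} transfer almost verbatim, provided one replaces ``half-thick'' by the relative notion ``$\eps$-half-thick'': a component of $G-B_G(R,L)$ that is reached by tails of infinitely many $\eps$-rays and retains this property at every larger radius. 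The existence of such a component is exactly where one uses that $\eps$ is thick together with the coarse embedding of $R$, since at most finitely many of the infinitely many disjoint $\eps$-rays can be trapped inside any fixed ball around $R$. Building the rays $R_i$ inside nested $\eps$-half-thick components then yields the required diverging sequence of $\eps$-rays.

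In the second case every $\phi\in\Gamma_\eps$ is elliptic. Here I would adapt \cref{lem:onlyElliptic:main} starting from a geodesic $\eps$-ray $R=r_0r_1\ldots$ and choosing automorphisms $\phi_j\in\Gamma_\eps$ with $\phi_j(r_{i_{j-1}})=r_{i_j}$ and $i_j-i_{j-1}\geq 4j$, so that $R_j:=\phi_1^{-1}\cdots\phi_j^{-1}(R_{\geq i_j})$ remains an $\eps$-ray and \cref{lem:onlyElliptic:geodesicsDiverge} delivers the required separation. The main obstacle of the whole proof lies precisely here: quasi-transitivity only guarantees that the full $\Aut(G)$-orbit of $r_0$ meets $R$ infinitely often, and in a non-accessible graph the $\Aut(G)$-orbit of $\eps$ can be infinite, so no single $\Gamma_\eps$-orbit on $V(R)$ need be infinite. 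Overcoming this likely requires additional structural input about thick ends of non-accessible quasi-transitive graphs, for instance a compactness argument producing, whenever $\Gamma_\eps$ fails to translate $r_0$ arbitrarily far along $R$, a limit end in $\overline{\Aut(G)\cdot\eps}$ that is again thick and has a strictly larger stabiliser, iterating until one lands in the non-elliptic case. I expect this interplay between the action of $\Aut(G)$ on the end space and the combinatorial ray-separation machinery of \cref{sec:Halin:Grid} to be the true crux of any proof of \cref{conj:QuasiIsoToTree}.
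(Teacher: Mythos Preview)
The statement you are addressing is a \emph{conjecture} in the paper, not a theorem; the paper offers no proof. Its only remark is that, by \cref{cor:HalfGrid:QuasiIsoToTree} and since every inaccessible quasi-transitive locally finite graph has a thick end, it would suffice to treat the inaccessible case. Your proposal is therefore not being compared against an existing argument but against an open problem, and indeed what you have written is a strategy with an acknowledged gap rather than a proof.

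Your plan to relativise the dichotomy of \cref{subsec:NonElliptic,subsec:Elliptic} to the end-stabiliser $\Gamma_\eps$ is natural, and you have correctly located the central obstruction: in the all-elliptic case \cref{lem:onlyElliptic:main} needs an infinite $\Gamma_\eps$-orbit on a geodesic $\eps$-ray, which quasi-transitivity of $G$ alone does not provide when the $\Aut(G)$-orbit of $\eps$ is infinite. This is precisely why the statement remains a conjecture.

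A few further issues in Case~1 are worth flagging. Your claim that a non-elliptic $\phi\in\Gamma_\eps$ admits a $\phi$-invariant double ray with a tail in $\eps$ is correct (non-elliptic automorphisms of locally finite graphs fix exactly two ends, so $\eps$ must be one of them), but the other tail may lie in a distinct end $\eps'$. The one-endedness hypothesis in \cref{lem:withNonElliptic:halfthick} is used to connect the two tails of $R$ by infinitely many disjoint paths; when $\eps\neq\eps'$ this fails, and you would need a different source of half-thick components, presumably the thickness of $\eps$ itself. Moreover, the rays $R_j$ produced by \cref{lem:withNonElliptic:main} inside cylinders around $R$ are not automatically $\eps$-rays rather than $\eps'$-rays; ensuring this requires additional care you have not supplied. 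Finally, your iterated-limit-end idea for escaping Case~2 is speculative and would require genuinely new structural input about the action of $\Aut(G)$ on the end space of non-accessible graphs; nothing in the present paper furnishes such input.
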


By \cref{cor:HalfGrid:QuasiIsoToTree} and because every inaccessible, \qt, \lf\ graph has a thick end, it remains to show for a proof of \cref{conj:QuasiIsoToTree} that every inaccessible, \qt, \lf\ graph contains an ultra-fat model of the half-grid.

\subsection{Diverging rays in quasi-transitive graphs}

In this section we apply \cref{main:HG:QT:OneEnded} to prove \cref{main:DivergingRays}, which we restate here for convenience:

\begin{customthm}{\cref*{main:DivergingRays}} \label{thm:DivergingRays}
    \emph{Every \qt, \lf\ graph that contains infinitely many pairwise disjoint rays also contains infinitely many pairwise diverging rays.}
\end{customthm}

Recall that two rays $R,S$ in a graph $G$ \defn{diverge} if for every $K \in \N$ they have tails $R' \subseteq R$, $S' \subseteq S$ satisfying $d_G(R', S') > K$.

\begin{proof}
    Let $G$ be a \qt, \lf\ graph with infinitely many pairwise disjoint rays.
    If $G$ has infinitely many ends, then we may take an infinite set of rays no two of which lie in the same end.
    Indeed, let $R_1$ and~$R_2$ be rays from distinct ends and let $S$ be a finite vertex set such that $R_1$ and~$R_2$ have tails in distinct components $C_1$ and~$C_2$ of $G-S$, respectively.
    Since $G$ is \lf, each $R_i$ has a tail in $C_i-B_G(S,n)$ for every $n\in\N$.
    Thus, $R_1$ and~$R_2$ diverge.

    So, we may assume that $G$ has only finitely many ends.
    Then infinitely many of the pairwise disjoint rays lie in the same end, which hence must be thick. As every graph with only finitely many ends is accessible, \cref{cor:HalfGrid:AccessibleGraph} yields that $G$ contains an ultra-fat model $(\cV, \cE)$ of the half-grid.
    Without loss of generality we may assume that the branch sets are finite.
    For every $i \in \N$, there exists a ray $R_i$ in $G$ that is contained in the union of the branch sets and paths of $(\cV, \cE)$ corresponding to the $i$-th vertical ray $S_i$ of the half-grid. Since all branch sets and paths are finite, $R_i$ has a tail outside of the branch sets and paths corresponding to the first $K$ vertices and edges of $S_i$. Since $(\cV, \cE)$ is ultra-fat, it follows that the rays $R_i$ diverge pairwise.
\end{proof}

\printbibliography

\end{document}